\newcommand\numberthis{\addtocounter{equation}{1}\tag{\theequation}}
\theoremstyle{plain}
\newtheorem{thm}{Theorem}
\newtheorem{theorem}[thm]{Theorem}
\newtheorem*{theoremA}{Theorem A}
\newtheorem*{theoremB}{Theorem B}
\newtheorem{lemma}[thm]{Lemma}
\newtheorem{proposition}[thm]{Proposition}
\theoremstyle{definition}
\newtheorem{definition}[thm]{Definition}
\newtheorem{remark}[thm]{Remark}
\newtheorem{example}[thm]{Example}
\newtheorem{problem}[thm]{Problem}
\newtheorem{property}[thm]{Property}
\newtheorem{thevarthm}[thm]{\varthmname}
\newenvironment{varthm*}[1]{\trivlist\item[]{\bf #1.}\it}{\endtrivlist}
\newcommand\be{\begin{eqnarray*}}
\newcommand\ee{\end{eqnarray*}}
\newcommand\tensor{\otimes}
\newcommand\lra{\longrightarrow}
\newcommand\C{\mathbb C}
\newcommand\K{\mathbb K}
\newcommand\cali{\mathcal I}
\newcommand\cald{\mathcal D}
\renewcommand\P{\mathbb P}
\newcommand\calo{{\mathcal O}}
\newcommand\newop[2]{\def#1{\mathop{\rm #2}\nolimits}}
\newop\edim{edim}
\newop\mult{mult}
\newop\Zeroes{Zeroes}
\newop\Jac{Jac}
\newop\Osc{Osc}
\newcommand\Sh{X_{S}}
\newcommand\To{X_{T}}
\newcommand\Btre{X_{B}}
\newop\Sym{Sym}
\newcommand\eqnref[1]{(\ref{#1})}
\newcommand\wtilde[1]{\widetilde{#1}}
\newcommand\keywords[1]{{\renewcommand\thefootnote{}\footnotetext{\textit{Keywords:} #1.}}}
\newcommand\subclass[1]{{\renewcommand\thefootnote{}\footnotetext{\textit{Mathematics Subject Classification (2010):} #1.}}}
\def\endproof{\hspace*{\fill}\endproofsymbol\endtrivlist}
\def\endproofsymbol{\frame{\rule[0pt]{0pt}{6pt}\rule[0pt]{6pt}{0pt}}}
\begin{document}

\author{Justyna~Szpond}
\title{Unexpected curves and Togliatti-type surfaces}
\date{\today}
\maketitle
\thispagestyle{empty}

\begin{abstract}
   The purpose of this note is to establish a direct link between the theory of unexpected hypersurfaces and varieties
   with defective osculating behavior. We identify unexpected plane curves of degree $4$ as
   sections of a rational surface $\Btre$ of degree $7$ in $\P^5$ with its osculating spaces of order $2$
   which in every point of $\Btre$ have dimension lower than expected. We put this result in perspective
   with earlier examples of surfaces with defective osculating spaces due to Shifrin and Togliatti.
   Our considerations are rendered by an analysis of Lefschetz Properties of ideals associated with the studied surfaces.
\keywords{fat points, Lefschetz properties, linear systems, postulation problem, SHGH conjecture, Togliatti system, unexpected hypersurfaces}
\subclass{14C20 \and 14J26 \and 14N05 \and 13A15 \and 13E10 \and 13F20}
\end{abstract}


\section{Introduction}
   In the ground-breaking paper \cite{CHMN} the authors introduced the notion of unexpected curves.
\begin{definition}
We say that a reduced finite set of points $Z\subset \P^2$ \emph{admits an unexpected curve} of degree
$j+1$ if there is an integer $j>0$ such that, for a general point $P$, $jP$ fails to impose the expected
number of conditions on the linear system of curves of degree $j+1$ containing $Z$. That is, $Z$ admits
an unexpected curve of degree $j+1$ if
$$
h^0(\cali_{Z+jP}(j+1))>\max\left\{ h^0(\cali_Z(j+1))-\binom{j+1}{2},0\right\}.
$$
\end{definition}
This notion has been generalized to hypersurfaces in projective spaces of arbitrary dimension
in the sequel paper \cite{HMNT}.

One of the first examples of unexpected curves (in fact this example has motivated this path of research) was discovered
in \cite[Proof of Proposition 7.3]{DIV14}. The recent work \cite{FGSW} shows that it is the unique
example of an unexpected quartic. This example is derived from the $B_3$ root system, see \cite[Example 4.6]{HMNT} and
\cite[section 2]{BMSS}.

On the other hand the study of osculating spaces of projective varieties has a long history, going back to Corrado Segre
and Gaston Darboux. A modern treatment has been given in \cite{Poh62}.
\begin{definition}\label{de: osculating space}
Let $X\subseteq \P^N$ be a smooth, complete, complex variety. Let $m\geq 1$ be an integer. The \emph{$m$-th osculating
space} to $X$ at $P$ is the linear subspace $\Osc^{(m)}_P(X)$ determined at a point $P\in X$ by the partial derivatives
of order $\leq m$ of the coordinate functions, with respect to a system of local parameters for $X$ at $P$, evaluated
at $P$.
\end{definition}
In particular, $\Osc^{(1)}_P(X)$ is just the tangent space of $X$ at $P$.

A somewhat more formal, equivalent description is the following.

Let $X$ be a smooth, complete, complex variety of dimension $n$ and let $L$ be a line bundle on $X$. Let $V=H^0(X,L)$.
The $m$-jet bundle of $L$ is the coherent sheaf
$$
\cali_m(L)=(p_1)_*(p_2^*L\otimes\calo_{X\times X}/\cali_\Delta^{m+1}),
$$
where $\Delta\subseteq X\times X$ is the diagonal and $p_1$, $p_2$ are the projection maps
\begin{center}
\begin{tikzcd}
 & X\times X \arrow[ld, "p_1"'] \arrow{rd}{p_2}&\\
 X & & X
\end{tikzcd}
\end{center}

This sheaf is locally free of rank $\binom{n+m}{n}$ and its fiber at a point $P$  can be identified with
$$
\cali_m(L)_P\simeq H^0(X,L\otimes\calo_X/\frak{m}_P^{m+1}),
$$
where $\frak{m}_P$ is the ideal sheaf of $P\in X$.

Then the $m$-th osculating space at $P$ is the projectivization $\P(j_{k,P}(V))\subseteq \P(V)$ of the image of
the evaluation map
$$
j_{k,P} : V\longrightarrow H^0(X,L\otimes\calo_X/\frak{m}_P^{m+1}).
$$
This agrees with Definition \ref{de: osculating space} in case $X\subset \P^N$ and $L=\calo_X(1)$.

It is natural to expect that the space $\Osc_P^{(m)}(X)$ has at  a general point $P\in X$ the (projective)
dimension $\binom{n+m}{n}-1$. If this dimension is lower than $\binom{n+m}{n}-1$ at every point, then
following  Shifrin \cite{Shi86} we say that $X$ is \emph{hypo-osculating of order $m$}.
   More precisely, the deficiency of osculating spaces of $X$ is measured by the number of Laplace
   equations $X$ satisfies.
\begin{definition}[Laplace equation]
   A projective variety $X\subset \P^N$ of dimension $n$ satisfies $\delta$ independent Laplace equations
   of order $m$, if for a general point $P\in X$ there is
   $$\dim\Osc^{(m)}_PX=\min\left\{\binom{n+m}{n}-1,N\right\}-\delta.$$
\end{definition}

In this note we are interested in surfaces $X\subseteq\P^5$. It is then expected that $\dim \Osc_P^{(2)}(X)=5$
at a general point $P\in X$. It has been noticed, apparently first by Eugenio Togliatti \cite{Tog29} that
there are hypo-osculating surfaces of order $2$ in $\P^5$.

Our main results are the following Theorems.
\begin{theoremA}
Let $Z=\{P_1,\ldots,P_9\}\subseteq \P^2$ be the $B_3$ configuration of points. Let $f: Y\longrightarrow \P^2$
be the blow up of $\P^2$ at points of $Z$ with the exceptional divisors $E_i$ over $P_i$ for $i=1,\ldots,9$.
Let $\Btre$ be the image of $Y$ under the morphism defined by $H^0(Y,f^*\calo_{\P^2}(4)-E_1-\ldots -E_9)$.
Then
\begin{itemize}
   \item[a)] $\Btre$ is a surface in $\P^5$ of degree $7$ with $3$ singular points.
   \item[b)] In all points of the smooth locus $(\Btre)_{sm}$, the surface is
      hypo-osculating with $$\dim\Osc_P^{(2)}((\Btre)_{sm})=4.$$
   \item[c)] There is a 1:1 correspondence between unexpected curves admitted by $Z$
      and second osculating spaces of $\Btre$ at its smooth points.
\end{itemize}
   Moreover, the divisor cut out on $\Btre$ by the second osculating space at a general point of $\Btre$
   is \emph{irreducible}.
\end{theoremA}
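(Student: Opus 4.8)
The plan is to realize the divisor in question as the proper transform of the unexpected quartic associated with $P$, and then to prove that quartic is irreducible. First I would use part~(b): since $\dim\Osc_P^{(2)}(\Btre)=4$, the osculating space is a hyperplane $H_P\subset\P^5$, and the divisor cut out is the hyperplane section $\Btre\cap H_P$. Write $L=f^*\calo_{\P^2}(4)-E_1-\ldots-E_9$ for the class defining $\varphi\colon Y\to\Btre$; then $L^2=16-9=7=\deg\Btre$, so $\varphi$ is birational onto its image. Pulling back $H_P$, the corresponding section of $L$ must vanish to order at least $3$ at $\varphi^{-1}(P)$. Since $\varphi^{-1}(P)$ lies over a general point $Q:=f(\varphi^{-1}(P))\in\P^2$, this section is, down on $\P^2$, a quartic through $Z$ with a triple point at $Q$; and because $H_P$ is itself a hyperplane, it is the \emph{unique} hyperplane through $\Osc_P^{(2)}(\Btre)$, so this quartic $C_Q$ is the unique unexpected quartic attached to $Q$ by part~(c). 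Consequently $\Btre\cap H_P=\varphi_*\wtilde{C_Q}$ with $\wtilde{C_Q}$ the proper transform, and as $\varphi$ is birational, $\Btre\cap H_P$ is irreducible if and only if $C_Q$ is.

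It then remains to prove that $C_Q$ is irreducible for general $Q$, and the second step is to reduce this to a finite case analysis. I would first record two combinatorial properties of the $B_3$ configuration, both verified in suitable coordinates: the only cubic through all nine points is the triangle $xyz=0$, so that $Z$ imposes independent conditions on cubics and lies on no pencil of cubics; and no line contains more than four of the points. Next, by constructibility the decomposition type of $C_Q$, and the way the points of $Z$ are distributed among its components, may be assumed constant for general $Q$. The crucial observation is that any component meeting the general point $Q$ must vary with $Q$; hence such a line-component contains at most one point of $Z$, and such a conic-component at most four.

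The third step is to exclude each reducible type of a degree-four curve, namely $2+2$, $1+1+1+1$, $2+1+1$ and $3+1$, using that the support of $C_Q$ must contain all of $Z$ while the multiplicities at $Q$ sum to $3$. Two irreducible conics give multiplicity at most $2$ at any point, so type $2+2$ cannot even produce a triple point. In type $1+1+1+1$ exactly three lines pass through $Q$ and carry at most $1+1+1$ points, while the fourth carries at most $4$, for a total of at most $7<9$; in type $2+1+1$ the conic and both lines must pass through $Q$, carrying at most $4+1+1=6<9$. The remaining, and genuinely delicate, case is $3+1$: here the line passes through $Q$ and the cubic is necessarily nodal at $Q$, so the cubic meets $Z$ in at least eight points.

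The main obstacle is exactly this last case. Ruling it out is where the independence of the conditions imposed by $Z$ on cubics is used: cubics through eight (or all nine) points of $Z$ move in at most a pencil, and the finitely many singular members of a pencil of cubics are nodal only at fixed points of the plane, never at the general point $Q$. With the $3+1$ case excluded, no reducible type survives; the analogous but easier count disposes of the non-reduced possibilities as well, since their supports contain still fewer points of $Z$. Hence $C_Q$ is irreducible for general $Q$, and therefore so is the divisor $\Btre\cap H_P$ cut out by the second osculating space, as claimed.
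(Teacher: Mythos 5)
Your proposal addresses only the final ``Moreover'' assertion of Theorem A: it explicitly takes parts b) and c) as inputs (``First I would use part (b)\dots'', ``by part (c)''), and part a) is invoked implicitly when you identify $\Btre$ as a surface whose general second osculating space is a hyperplane. But a), b), c) \emph{are} the bulk of the theorem, and they require substantive arguments that your proposal never supplies. In the paper, a) is Proposition \ref{prop: B3 as deg 7 surface}: one shows $M=4H-E_1-\ldots-E_9$ is nef by intersecting with the proper transforms $N_1,N_2,N_3$ of the three lines through four points each, then invokes Harbourne's theorems on anticanonical rational surfaces (\cite[Theorem III.1.(a)]{Har97TAMS}, \cite[Proposition 3.2]{Har97JA}) to get base point freeness, $h^1=0$, and a birational morphism contracting exactly $N_1,N_2,N_3$, whence degree $M^2=7$ and three singular points. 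Part b) is Proposition \ref{prop: B3 is hypo-osculating}, proved by an explicit rank computation on the Hessian matrix of the parametrization \eqref{eq:B3 psi} (rank $5$ at a general point, with the degeneracy stratification along $abc=0$ and at the coordinate points). None of this can be waved through: for instance, the uniqueness of the hyperplane in your first step is exactly the statement that the Hessian has rank $5$, not less, at a general point. As a proof of Theorem A your text is therefore circular on b)--c) and silent on a); this is a genuine gap.

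That said, the part you do prove is the one place where the paper itself is thin: the paper obtains the unexpected quartic via the explicit equation \eqref{eq:quartic} imported from \cite{BMSS}, and the irreducibility claim rests on that explicit form and on the literature (\cite{BMSS}, \cite{FGSW}) rather than on an argument given in the text. Your synthetic argument --- constancy of the decomposition type for general $Q$, the observation that a component through the moving point $Q$ carries at most $1$ (line) or $4$ (conic) points of $Z$, and the elimination of the types $2+2$, $1+1+1+1$, $2+1+1$, $3+1$ and the non-reduced types by counting points of $Z$ on the support --- is essentially correct and is a more self-contained route. Two small repairs: in type $3+1$ you should explicitly dismiss the subcase where the line misses $Q$, which would force a triple point on the cubic, impossible for an irreducible cubic; and in the pencil step you should not assert that the pencil of cubics through the eight fixed points has only finitely many singular members (a priori the pencil could lie in the discriminant), but rather argue dimensionally: since the pencil contains the reduced cubic $xyz$, it has no fixed multiple component and only finitely many non-reduced members, so the union of singular points of all its members is at most one-dimensional and cannot contain the general point $Q$ of $\P^2$. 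With these patches, your argument would upgrade the paper's citation-based treatment of the ``Moreover'' clause --- but it does not replace the proofs of a), b), c).
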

\begin{theoremB}
   Let $X_B'$ be the image of the morphism
   $$\eta:\P^2\ni(a:b:c)\lra (a^3: b^3: c^3: a(b^2-c^2): b(a^2-c^2): c(a^2-b^2))\in\P^5.$$
   Then
\begin{itemize}
   \item[a)] $X_B'$ is a smooth surface of degree $9$ in $\P^5$.
   \item[b)] The surface $X_B'$ is hypo-osculating with
     $$\dim\Osc_P^{(2)}(X_B')=4$$
     for all points $P\in X_B'$, except those in the $B_3$ configuration, i.e., except
     the images of points $P_1,\ldots,P_9$.
\end{itemize}
\end{theoremB}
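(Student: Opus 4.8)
The two parts have rather different flavours, so the plan is to treat them separately. For part (a), the plan is to realise $\eta$ as the cubic Veronese embedding $v_3\colon\P^2\hookrightarrow\P^9$ followed by the linear projection $\pi\colon\P^9\rationalto\P^5$ with centre the $3$-plane $C\subseteq\P^9$ apolar to the span $W\subseteq H^0(\P^2,\calo_{\P^2}(3))$ of the six cubics defining $\eta$. First I would note that $\eta$ is a morphism: the three pure cubes $a^3,b^3,c^3$ already have no common zero, so $W$ is base-point free. Smoothness and the degree then both follow once I show that $\eta$ is a closed embedding, i.e. that $C$ meets neither $v_3(\P^2)$ nor any of its secant or tangent lines, equivalently that $W$ separates points and tangent vectors. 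Separation of tangent vectors (the immersion property) is a rank-$2$ Jacobian computation which, exploiting the symmetry of the six cubics under the Weyl group of $B_3$ (signed permutations of $a,b,c$), need only be checked in one affine chart. Separation of points is an elementary case analysis: if $\eta(a\colon b\colon c)=\eta(a'\colon b'\colon c')$ then, after scaling, $a'=\omega_a a$, $b'=\omega_b b$, $c'=\omega_c c$ with $\omega_\bullet^3=1$, and comparing the three mixed coordinates forces $\omega_a=\omega_b=\omega_c$, so the points coincide. Once $\eta$ is a closed embedding, $X_B'\cong\P^2$ is smooth and $\deg X_B'=(3H)^2=9$.

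For part (b), the plan is to compute the osculating dimension pointwise through the identity
\[
\dim\Osc^{(2)}_P(X_B')=5-\delta(p),\qquad \delta(p)=\dim\bigl(W\cap T_p\bigr),
\]
where $P=\eta(p)$ and $T_p\subseteq H^0(\P^2,\calo_{\P^2}(3))$ is the $4$-dimensional space of cubics with a triple point at $p$ (the cubic cones with vertex $p$). This comes from the description of the osculating space as the image of the order-$2$ jet evaluation map: a cubic lies in its kernel at $p$ exactly when it vanishes to order $\ge 3$ at $p$, so $\delta(p)$ is precisely the number of Laplace equations of order $2$ satisfied at $P$. Writing out the $6\times 6$ matrix whose rows are $\eta$ and its first- and second-order partial derivatives in a chart, I would then exhibit a single linear relation among the rows, with polynomial coefficients, valid at every point. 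This shows the matrix has rank at most $5$, hence $\delta(p)\ge 1$ and $\dim\Osc^{(2)}_P(X_B')\le 4$ everywhere.

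It remains to locate the points where $\delta(p)\ge 2$, and this is the step I expect to be the main obstacle. The coefficients of the cone cubic $F_p\in W$ with vertex $p$ are given, up to scale, by the maximal minors of the (rank $\le 5$) matrix of triple-point conditions; these minors are explicit forms in $p$, and the locus $\{\delta\ge 2\}$ is exactly their common zero scheme. The task is to prove that this scheme is reduced, of length nine, and equal to the $B_3$ configuration $P_1,\dots,P_9$. By the $B_3$-symmetry the nine points split into two orbits, and membership in $\{\delta\ge 2\}$ need only be verified on one representative of each: at the coordinate point $P_3=(0\colon0\colon1)$ two of the pure cubes $a^3,b^3$ give independent cones with that vertex, while at $P_4=(1\colon1\colon0)$ the cube $c^3$ together with the extra cone $(a-b)^3-3(a-b)c^2\in W$ does, so $\delta=2$ at all nine points. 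The genuinely computational heart is the reverse inclusion — that no further point has $\delta\ge 2$, i.e. that the zero scheme of the minors has length exactly nine — which I would settle by the symmetry-reduced minor computation in the chart $c\neq 0$ (and, by symmetry, $a\neq 0$, $b\neq 0$). This simultaneously establishes $\delta(p)=1$, hence $\dim\Osc^{(2)}_P(X_B')=4$, at every point off the configuration.
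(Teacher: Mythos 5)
You are correct, and your route differs from the paper's in a genuine way, chiefly in part a). The paper delegates part a) entirely to machine computation (Proposition \ref{prop:B3' eta}: the ideal, smoothness, degree and even the minimal free resolution are ``easy to check by a symbolic algebra program''), whereas you prove it by hand, realizing $\eta$ as a projection of the cubic Veronese and establishing a closed embedding by separating points (the cube-roots-of-unity case analysis, which does close up at the special points with $a^2=b^2=c^2$ and on the coordinate lines) and tangent vectors (one chart suffices, since the signed permutations preserve $W$ up to signs). This buys a self-contained, hand-checkable proof of smoothness and of $\deg X_B'=(3H)^2=9$ via $X_B'\cong\P^2$, at the cost of losing the resolution data the computation provides. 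Your part b) is essentially the paper's argument (Proposition \ref{prop:B3' hypo osculating}: rank stratification of the Hesse matrix of $\eta$, ``along the same lines'' as for $\Btre$, calculations omitted), cleanly reorganized through the identity $\dim\Osc^{(2)}_P(X_B')=5-\dim\bigl(W\cap T_p\bigr)$; note that the universal row relation you plan to exhibit need not be searched for --- it is exactly the cubic $g_Q$ of \eqref{eq:quartic-dual}, whose coefficient vector moreover vanishes precisely at the nine points of $Z$, which is why separate cones are needed there. One small patch: your two independent cones at each point of $Z$ give only $\delta\ge 2$, which is all that Theorem B strictly needs; equality $\delta=2$ (as asserted in Proposition \ref{prop:B3' hypo osculating}) follows since, e.g., at $P_3$ the $2$-jet of $c(a^2-b^2)$ is $a^2-b^2\neq 0$, so the kernel is exactly $\langle a^3,b^3\rangle$. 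In both treatments the genuinely computational core --- that the $5\times 5$ minors of the jet matrix vanish only at $Z$ --- remains an explicit chart computation, in your version pleasantly reduced by the $B_3$-symmetry.
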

   Part a) of  Theorem A is proved with Proposition \ref{prop: B3 as deg 7 surface},
   part b) is Proposition \ref{prop: B3 is hypo-osculating} and part c) is an easy consequence of Lemma \ref{lem: the quartic}.
   Theorem B is proved in Propositions \ref{prop:B3' eta} and \ref{prop:B3' hypo osculating}.


\section{The Shifrin sufrace}\label{sec: Shifrin}
   In this section we recall briefly an example of a surface embedded in $\P^5$
   by a \emph{complete} linear system which satisfies a Laplace equation of order $2$.
Let $\Sh\subseteq \P^5$ be the image of the mapping
$$
\varphi : \P^1\times \P^1 \ni ((s:t),(u,v))\rightarrow (stv:t^2u:s^2v:stu:s^2u:t^2v) \in\P^5.
$$
Let $u_0,\ldots,u_5$ be the coordinates in $\P^5$. Then the ideal $I(\Sh)$ defining $\Sh$ is generated by $6$
quadrics
\[
\begin{array}{lll}
u_3^2-u_1u_4, \; & u_2u_3-u_0u_4, \, & u_0u_3-u_4u_5\\
u_1u_2-u_4u_5, \; & u_0u_1-u_3u_5, \, & u_0^2-u_2u_5
\end{array}
\]
and its minimal free resolution is
$$
S/I(\Sh)\leftarrow S \leftarrow S(-2)^{\oplus 6}\leftarrow  S(-3)^{\oplus 8}\leftarrow S(-4)^{\oplus 3},
$$
where $S=\C[u_0,\ldots,u_5]$.

Note that the linear system defining $\varphi$ is of bidegree $(2,1)$. If we consider this system directly
on $\P^1\times\P^1$, then we have clearly
$$
h^0(\P^1\times\P^1,\calo (2,1))=6
$$
and for every point $P=(P_1,P_2)\in\P^1\times \P^1$ there exists a divisor with a triple point there,
namely this is
$$
D_P=2\cdot \{P_1\}\times\P^1+\P^1\times\{P_2\}.
$$
In particular this divisor is reducible and non-reduced for all points $P$. Thus, even though the existence of $D_P$
is unexpected from the point of view of counting conditions, it is easily explained by the geometry
of the surface.
\begin{property}\label{pro:reducible on Shifrin}
   All divisors cut out on the Shifrin surface by 2nd osculating spaces are reducible.
\end{property}
   This example has a remarkable property that all its second osculating spaces have the same dimension,
   which is one less than the expected one. For such surfaces
   Shifrin introduced in \cite{Shi86} the following terminology.
\begin{definition}
   We say that a surface $X\subseteq \P^5$ is \emph{perfectly} hypo-osculating if
$$
\dim\Osc_P^{(2)}(X)=4
$$
for \emph{every} point $P\in X$.
\end{definition}
   He formulated a conjecture
   \cite[Conjecture 0.6]{Shi86} to the effect that the surface
   discussed in this part is the unique smooth perfectly hypo-osculating surface in $\P^5$.
   This conjecture was proved Piene and Tai in \cite{PieTai}.
   Along these lines we propose a new Problem \ref{prob:irreducible Togliatti},
   we relax the assumption of perfectly hypo-osculating and replace it by
   irreducible hypo-osculating divisors.

\section{The Togliatti surface}
   In this section we discuss the classical example of Togliatti.
   Let $\To\subset\P^5$ be the closure of the image of the mapping
   $$\psi:\P^2\ni(x:y:z)\dashrightarrow (x^2y:x^2z:y^2x:y^2z:z^2x:z^2y)\in\P^5.$$
   Let, as before, $u_0,\ldots,u_5$ be the coordinates in $\P^5$. Then the ideal
   $I(\To)$ defining $\To$ is generated by
\[
\begin{array}{lllll}
   u_2u_4-u_0u_5,\; & u_1u_3-u_0u_5,\; & u_3u_4^2-u_1u_5^2,\; & u_0u_4^2-u_1^2u_5,\; & u_3^2u_4-u_2u_5^2\\
   u_0u_3u_4-u_1u_2u_5,\; & u_0u_3^2-u_2^2u_5,\; & u_1u_2^2-u_0^2u_3,\; & u_1^2u_2-u_0^2u_4 &\\
\end{array}
\]
   and its minimal free resolution is
   $$
   S/I(\To)\leftarrow S\leftarrow S(-2)^{\oplus 2}\oplus S(-3)^{\oplus 7}\leftarrow S(-4)^{\oplus 19}\leftarrow S(-5)^{\oplus 16}\leftarrow S(-6)^{\oplus 6}\leftarrow S(-7).
   $$
   For a more geometric description
   let $P_1=(1:0:0)$, $P_2=(0:1:0)$ and $P_3=(0:0:1)$ be the coordinate points in $\P^2$ and let $Z=\left\{P_1,P_2,P_3\right\}$.
   Identifying, as usual, global sections with homogeneous polynomials, we may take as a basis of the linear system $V=H^0(\P^2,\calo_{\P^2}(3)\otimes \cali_Z)$
   the monomials
   $$x^2y, x^2z, y^2x, y^2z, z^2x, z^2y, xyz.$$
   Let $f:Y\to\P^2$ be the blow up of $\P^2$ at $P_1,P_2,P_3$ with exceptional divisors $E_1,E_2,E_3$ respectively,
   and let $H=f^*\calo_{\P^2}(1)$. Then the following holds.
\begin{proposition}\label{prop: Togliatti}
   The linear system
   $$M=3H-E_1-E_2-E_3$$
   is very ample and determines an embedding
   $$\varphi_M:Y\to\P^6,$$
   which lifts the rational map
   $$\wtilde{\psi}:\P^2\ni(x:y:z)\dashrightarrow (x^2y:x^2z:y^2x:y^2z:z^2x:z^2y:xyz)\in\P^6,$$
   i.e., the following diagram commutes
\begin{center}
\begin{tikzcd}
 Y \arrow[d, "f"'] \arrow{rd}{\varphi_M} & \\
 \P^2 \arrow[r, dashed, "\wtilde{\psi}"] &  \P^6
\end{tikzcd}
\end{center}
   Then the Togliatti surface $\To$ is the image of $\varphi_M(Y)$ under the projection $\pi$
   from the last coordinate point $(0:0:0:0:0:0:1)\in\P^6$. We have
\begin{center}
\begin{tikzcd}
 Y \arrow[d, "f"'] \arrow{r}{\varphi_M} & \P^6 \arrow[d, dashed, "\pi"]\\
 \P^2 \arrow[r, dashed, "\psi"] \arrow[ur, dashed, "\wtilde{\psi}"] &  \P^5
\end{tikzcd}
\end{center}
\end{proposition}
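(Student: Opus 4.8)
The plan is to recognise $M=3H-E_1-E_2-E_3$ as the anticanonical class of $Y$ and to read the embedding off an explicit monomial basis of its sections. First I would note that $K_Y=f^*K_{\P^2}+E_1+E_2+E_3=-3H+E_1+E_2+E_3$, so that $-K_Y=M$ and $(Y,M)$ is a del Pezzo surface of degree $M^2=(3H-E_1-E_2-E_3)^2=9-3=6$. Identifying $H^0(Y,M)$ with the space of plane cubic forms vanishing at $P_1,P_2,P_3$, and using that these three points are not collinear (hence impose independent conditions), the $10$-dimensional space of cubics drops to dimension $7$. Concretely $x^3,y^3,z^3$ are the only degree-$3$ monomials failing to vanish at one of the coordinate points, so the remaining seven monomials $x^2y,x^2z,y^2x,y^2z,z^2x,z^2y,xyz$ form a basis of $H^0(Y,M)$. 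Written in these coordinates $\varphi_M$ is therefore precisely the lift of $\wtilde\psi$: on $Y\setminus(E_1\cup E_2\cup E_3)$ the map $f$ is an isomorphism and $\varphi_M=\wtilde\psi\circ f$, while over the exceptional locus $\varphi_M$ is the regular extension that resolves the indeterminacy of $\wtilde\psi$. This already makes the first diagram commute.

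Second, I would prove that $M$ is very ample. The quickest route is the classical fact that on a del Pezzo surface of degree $\ge 3$ the anticanonical system is very ample; as $M=-K_Y$ with $M^2=6$, this realises $\varphi_M$ as a closed embedding of $Y$ as a smooth surface of degree $6$ in $\P^6$. For a self-contained argument one may instead check nefness of $M$ on the finitely many negative curves (the three $E_i$ and the three strict transforms $H-E_i-E_j$, on each of which $M$ has intersection number $1$), whence $M$ is ample by Nakai--Moishezon, and then apply Reider's theorem to $M=K_Y+L$ with $L=-2K_Y=2M$: here $L$ is nef with $L^2=24\ge 10$, and the only Reider obstruction would be a curve $C$ with $L\cdot C=1$ and $C^2=0$, which is impossible since $L\cdot C=2\,M\cdot C$ is even. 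Either way $M$ separates points and tangent directions. I expect this very-ampleness step to be the only point genuinely requiring proof; everything else is bookkeeping with the monomial basis.

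Finally, I would identify the projected image with $\To$. The centre of projection $(0:0:0:0:0:0:1)$ is dual to the last section $xyz$, so $\pi$ simply deletes the final coordinate, and hence $\pi\circ\varphi_M$, expressed on $\P^2$ via $f$, is the map $(x:y:z)\dashrightarrow(x^2y:x^2z:y^2x:y^2z:z^2x:z^2y)$, i.e. $\psi$. One checks that the centre does not lie on $\varphi_M(Y)$: a point with $xyz\ne 0$ has $x,y,z$ all nonzero, hence $x^2y\ne 0$, so it cannot equal $(0:\cdots:0:1)$; and every point of the exceptional divisors and of the strict transforms of the coordinate lines has vanishing last coordinate. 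Thus $\pi$ is a genuine morphism on $\varphi_M(Y)$, and since $\To$ was defined as the closure of the image of $\psi$ while $\pi\circ\varphi_M$ agrees with $\psi\circ f$ on a dense open set, the image $\pi(\varphi_M(Y))$ equals $\overline{\psi(\P^2)}=\To$, which completes the proof.
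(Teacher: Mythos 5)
Your proof is correct and follows essentially the same route as the paper, whose entire argument is the one-line observation that $M=-K_Y$ and $Y$ is the del Pezzo surface of degree $6$ (so that very ampleness is the classical fact about anticanonical systems on del Pezzos of degree $\geq 3$); your monomial-basis computation, the commutativity of the diagrams, and the check that the projection centre avoids $\varphi_M(Y)$ are exactly the bookkeeping the paper leaves implicit. One small caveat on your optional self-contained alternative: the list of Reider obstructions to very ampleness is larger than you state --- besides $L\cdot D=1$, $D^2\in\{0,-1\}$ (killed by your parity argument) it includes $L\cdot D=0$, $D^2\in\{-1,-2\}$ (excluded here because $M$ is ample) and $L\cdot D=2$, $D^2=0$, i.e.\ $M\cdot D=1$, $D^2=0$, which the parity argument does not touch and which requires a short lattice check (writing $D=aH-\sum b_iE_i$, the conditions $3a-\sum b_i=1$ and $a^2=\sum b_i^2$ are incompatible by Cauchy--Schwarz) --- though since your primary route via the classical del Pezzo fact is complete, this does not affect the validity of the proof.
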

\proof
   It suffices to note that $M$ is the anti-canonical bundle on $Y$, which itself is the Del Pezzo surface
   of degree $6$.
\endproof
   The hypo-osculating divisors for the Togliatti surface $\To$ have an easy geometric interpretation.
   Let $P=(a:b:c)$ be a general point in $\P^2$. We may assume, in particular that $a,b,c$ are all non-zero.
   Then the polynomial
   $$f_P(x,y,z)=(c x-a z)(c y-b z)(b x-a y)$$
   defines a degree $3$ curve (the union of $3$ lines) vanishing in the three coordinate points
   of $\P^2$ and vanishing at $P$ to order $3$. This is visualized in Figure \ref{fig: Togliatti}.
\begin{figure}[h!]
\begin{center}
\definecolor{xdxdff}{rgb}{0,0,0}
\definecolor{ududff}{rgb}{0,0,0}
\begin{tikzpicture}[line cap=round,line join=round,>=triangle 45,x=1.0cm,y=1.0cm]
\clip(1,-1) rectangle (7,4.2);
\draw [line width=2.pt,domain=3.7:4.7] plot(\x,{(--9.2-1.94*\x)/0.48});
\draw [line width=2.pt,domain=3:7] plot(\x,{(--6.36-1.06*\x)/1.52});
\draw [line width=2.pt,domain=-1:6] plot(\x,{(--2.12-1.06*\x)/-2.48});
\begin{scriptsize}
\draw [fill=ududff] (4.,3.) circle (2.5pt);
\draw[color=ududff] (4.9,3.37) node {$(0:0:1)$};
\draw [fill=xdxdff] (6.,0.) circle (2.5pt);
\draw[color=xdxdff] (6.35,0.37) node {$(0:1:0)$};
\draw [fill=xdxdff] (2.,0.) circle (2.5pt);
\draw[color=xdxdff] (2,0.37) node {$(1:0:0)$};
\draw [fill=ududff] (4.48,1.06) circle (3.5pt);
\draw[color=ududff] (4.62,1.43) node {$P$};
\end{scriptsize}
\end{tikzpicture}
\caption{$C_P$}
\label{fig: Togliatti}
\end{center}
\end{figure}

   Since
   \begin{equation}\label{eq:Togliatti}
      f_P=bc^2\cdot x^2y-b^2c\cdot x^2z-ac^2\cdot y^2x+a^2c\cdot y^2z+ab^2\cdot z^2x-a^2b\cdot z^2y,
   \end{equation}
   it corresponds to a divisor $D_P$ in $\P^5$. Since $\mult_PD_P=3$, we can identify $\Osc_P^{(2)}\To$ with $D_P$.
   Thus, similarly as in the case of the Shifrin surface, we have that all divisors $D_P$ are reducible.
\begin{property}\label{pro:reducible on Togliatti}
   All divisors cut out on the Togliatti surface by 2nd osculating spaces are reducible.
\end{property}


\section{The unexpected quartics and a $B_3$-surface in $\P^5$}
   In this section, which is the core of this note, we study a hypo-osculating surface
   in $\P^5$ whose general osculating space of order $2$ is determined by an irreducible
   divisor. We begin setting up the notation and recalling basic properties proved in \cite{CHMN}.
\subsection{The ideal of the $B_3$-surface}
Let $Z$ be the following set of points in $\P^2$:
$$
\begin{array}{lll}
   P_1=(1:0:0),& P_2=(0:1:0),& P_3=(0:0:1),\\
   P_4=(1:1:0),& P_5=(1:-1:0),& P_6=(1:0:1),\\
   P_7=(1:0:-1),& P_8=(0:1:1),& P_9=(0:1:-1).
\end{array}
$$
These points impose independent conditions on quartics in $\P^2$. The vector space $H^0(\P^2,\calo_{\P^2}(4) \otimes \cali(Z))$
is generated by the following six quartics:
\begin{equation}\label{eq:deg4}
\begin{array}{lll}
   x^2yz,& xy^2z,& xyz^2,\\
   xy(x^2-y^2),& xz(x^2-z^2),& yz(y^2-z^2)
\end{array}
\end{equation}
and their set of zeroes is exactly $Z$ (note that the saturated ideal $I(Z)$ has one generator $xyz$ of degree $3$
and three generators of degree $4$).

Let $f:Y\rightarrow \P^2$ be the blow up at points $P_1,\ldots,P_9$ with exceptional divisors $E_1,\ldots, E_9$.
Let $H=f^*\calo_{\P^2}(1)$.
\begin{proposition}\label{prop: B3 as deg 7 surface}
The linear system
$$
M=4H-E_1-\ldots -E_9
$$
is base point free and defines a morphism birational onto its image
$$
\varphi_M :Y\rightarrow \P^5
$$
which is an isomorphism away from the proper transforms of three lines
$$
\begin{array}{lll}
   L_1 : x=0 & \mbox{\rm{ through }}& P_2, P_3, P_8, P_9,\\
   L_2 : y=0 & \mbox{\rm{ through }}& P_1, P_3, P_6, P_7,\\
   L_3 : z=0 & \mbox{\rm{ through }}& P_1, P_2, P_4, P_5.
\end{array}
$$

\end{proposition}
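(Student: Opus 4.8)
The plan is to first pin down the target and a basis of sections, then prove base-point-freeness, and finally read off the geometry of $\varphi_M$ directly from the six quartics in \eqref{eq:deg4}. Since the nine points of $Z$ impose independent conditions on quartics, $h^0(Y,M)=\binom{6}{2}-9=6$, so $\varphi_M$ indeed maps into $\P^5$, and a basis of sections is given by the listed quartics, which I label $u_0=x^2yz$, $u_1=xy^2z$, $u_2=xyz^2$, $u_3=xy(x^2-y^2)$, $u_4=xz(x^2-z^2)$, $u_5=yz(y^2-z^2)$.

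For base-point-freeness I would separate two cases. Over $\P^2\setminus Z$ there are no base points, because the common zero locus of the six quartics is exactly $Z$, as already recorded. The only candidates are therefore the exceptional divisors, and here I would use $M\cdot E_i=-E_i^2=1$, so that $M|_{E_i}\cong\calo_{\P^1}(1)$; it then suffices to exhibit, for each $i$, two quartics through $Z$ of multiplicity exactly $1$ at $P_i$ with distinct tangent lines, since their restrictions to $E_i$ are then two sections of $\calo(1)$ with different zeros. By the $B_3$-symmetry permuting the points it is enough to treat one coordinate point and one ``edge'' point: at $P_1=(1:0:0)$ the quartics $u_3,u_4$ have leading terms $y,z$, hence distinct tangents $\{y=0\}$ and $\{z=0\}$; at $P_4=(1:1:0)$ the quartics $u_0,u_3$ give tangents $\{z=0\}$ and $\{x=y\}$. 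Thus $M$ has no base point on any $E_i$ and is globally base-point-free.

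The heart of the argument is the observation that all nine points of $Z$ lie on $L_1\cup L_2\cup L_3$, so $f$ restricts to an isomorphism $Y\setminus(\wtilde{L_1}\cup\wtilde{L_2}\cup\wtilde{L_3})\xrightarrow{\sim}\{xyz\neq 0\}\subset\P^2$. On this torus the first three coordinates of $\varphi_M$ satisfy $(u_0:u_1:u_2)=(x^2yz:xy^2z:xyz^2)=(x:y:z)$, so the projection to the first three coordinates inverts $\varphi_M$. Consequently $\varphi_M$ is injective with a regular rational inverse on its image, hence an isomorphism onto its image over the open torus; in particular $\varphi_M$ is birational onto $\Btre$.

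It remains to see that $\varphi_M$ degenerates precisely along the three lines. Restricting the basis to $L_1=\{x=0\}$ kills $u_0,\dots,u_4$ and leaves only $u_5$, so $\varphi_M$ sends $L_1$ to the single point $(0:0:0:0:0:1)$; symmetrically $L_2,L_3$ go to $(0:0:0:0:1:0)$ and $(0:0:0:1:0:0)$. This is consistent with $M\cdot\wtilde{L_i}=4-4=0$, which forces each proper transform to be contracted. These three image points are mutually distinct and, having vanishing first three coordinates, cannot coincide with any torus image; therefore $\varphi_M$ is an isomorphism exactly away from $\wtilde{L_1}\cup\wtilde{L_2}\cup\wtilde{L_3}$. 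I expect the only genuinely computational step to be the base-point-freeness check on the exceptional divisors, where one must verify distinctness of tangent directions; everything else is dictated by the clean fact that the first three quartics recover $(x:y:z)$ on the torus.
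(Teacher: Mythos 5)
Your computation of $h^0(Y,M)=6$, your base-point-freeness check on the $E_i$ via pairs of quartics with distinct tangent directions, and your inversion of $\varphi_M$ over the torus by projecting to $(u_0:u_1:u_2)$ are all correct. The gap is in the final step. The set $Y\setminus(\wtilde{L_1}\cup\wtilde{L_2}\cup\wtilde{L_3})$ is strictly larger than $f^{-1}(\{xyz\neq 0\})$: it also contains the nine exceptional curves $E_1,\ldots,E_9$ minus their finitely many intersections with the $\wtilde{L_j}$. Your closing inference (``the three contracted points are distinct and off the torus image, therefore $\varphi_M$ is an isomorphism exactly away from $\wtilde{L_1}\cup\wtilde{L_2}\cup\wtilde{L_3}$'') is a non sequitur: it says nothing about whether $\varphi_M$ identifies a point of some $E_i$ with a point of another $E_j$ or of the torus, nor whether $\varphi_M$ is unramified along the $E_i$ (set-theoretic injectivity alone does not give an isomorphism onto the image; the differential must be injective too). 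The danger is real, not hypothetical: a chart computation on the blow-up at $P_1$ (say $x=1$, $y=ts$, $z=s$) shows that $\varphi_M(E_1)$ is the whole line $\{u_0=u_1=u_2=u_5=0\}$, parametrized by $(0:0:0:t:1:0)$; this line passes through both $(0:0:0:1:0:0)$ and $(0:0:0:0:1:0)$, i.e.\ through two of the three contracted points, so the images of $E_1,E_2,E_3$ pairwise meet there, and one must verify that the points of $E_i$ mapping to the singular points are exactly $E_i\cap\wtilde{L_j}$ (they are, but this needs checking) before the statement follows. Note also that your criterion ``vanishing first three coordinates cannot coincide with any torus image'' applies to \emph{every} point of $\varphi_M(E_1)$, so it cannot by itself separate the exceptional locus from the contracted one.

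To close the gap along your lines you would need, in blow-up charts (the $B_3$-symmetry reduces the work to $E_1$ and $E_4$, as in your base-point argument): (i) that $\varphi_M|_{E_i}$ embeds $E_i$ as a line, which does follow formally from $M\cdot E_i=1$ plus base-point-freeness; (ii) that the images of distinct $E_i$, and of $E_i$ and the torus, meet only at the three contracted points, with all preimages on the $\wtilde{L_j}$; and (iii) that $d\varphi_M$ is injective at every point of $E_i\setminus\bigcup_j\wtilde{L_j}$. The paper's route avoids this case analysis entirely: it shows $M$ is nef and that $N_1,N_2,N_3$ are the \emph{only} irreducible curves orthogonal to $M$ (a fact your proposal never establishes), observes $-K_Y=N_1+N_2+N_3+E_1+E_2+E_3$ so that $(-K_Y\cdot M)=3$ and $M^2=7$, and then invokes Harbourne's results on anticanonical surfaces, \cite[Theorem III.1.(a)]{Har97TAMS} for base-point-freeness and \cite[Proposition 3.2]{Har97JA} for the fact that $\varphi_M$ is a birational morphism contracting exactly the curves perpendicular to $M$ and an isomorphism elsewhere. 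Either route can work, but as written your proof establishes the isomorphism only over the torus, not on the whole complement of the three proper transforms.
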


\begin{proof}
Let
$$
\begin{array}{l}
   N_1=H-E_2-E_3-E_8-E_9,\\
   N_2=H-E_1-E_3-E_6-E_7,\\
   N_3=H-E_1-E_2-E_4-E_5
\end{array}
$$
be proper transformations of the lines $L_1, L_2, L_3$.

To begin with, we show that $M$ is nef. Note that
$$
\begin{array}{lll}
   M.N_i=0 & \mbox{\rm{ for }}& i=1,2,3.
\end{array}
$$
We have also $M.E_i=1$ for $i=1,\ldots,9$. Let $C$ be a plane curve of degree
$d$ passing through the points $P_1,\ldots,P_9$ with multiplicity $m_1,\ldots,m_9$
different from $L_1, L_2, L_3$. Let $\widetilde{C}=dH-m_1E_1-\ldots-m_9E_9$
be the proper transform of $C$. Then
$$
\begin{array}{l}
  0\leq \widetilde{C}.N_1=d-m_2-m_3-m_8-m_9,\\
  0\leq \widetilde{C}.N_2=d-m_1-m_3-m_6-m_7,\\
  0\leq \widetilde{C}.N_3=d-m_1-m_2-m_4-m_5.
\end{array}
$$
Hence
$$
3d-2(m_1+m_2+m_3)-(m_4+m_5+m_6+m_7+m_8+m_9)\geq 0.
$$
Since
$$
M.\widetilde{C}=4d-\sum^9_{i=1}m_i,
$$
we conclude that $M.\widetilde{C}>0$. So this shows not only that $M$ is nef but also
that $N_1, N_2, N_3$ are the only effective curves on $Y$ orthogonal to $M$ with respect
to the intersection form.

The surface $Y$ is an anticanonical surface. That means that $-K_Y$ is an effective divisor. Indeed,
we have
$$
-K_Y=N_1+N_2+N_3+E_1+E_2+E_3.
$$
Linear systems on anticanonical surfaces have been studied intensively by Harbourne. In
\cite[Theorem III.1.(a)]{Har97TAMS} he showed in particular that if $Y$ is an anticanonical
surface and $M$ is a nef linear system on $Y$ such that $(-K_Y.M)\geq 2$, then $M$ is non-special,
i.e. $h^1(Y,M)=0$ and $M$ is base point free. These conditions hold in our situation, since
$$
(-K_Y.M)=12-9=3.
$$
In \cite[Proposition 3.2]{Har97JA} Harbourne showed moreover that on an anticanonical surface
a nef class $M$ with $M^2>0$ and such that $(-K_Y.M)\geq 3$ defines a birational morphism
onto a surface obtained by contracting all curves perpendicular to $M$. Since $M^2=7$ in our
case, we conclude that the image of $Y$ under the morphism $\varphi_{|M|}$ is a surface $\Btre$
of degree $7$, singular at $3$ points resulting from contracting the curves $N_1, N_2$ and $N_3$.
\end{proof}
   Working with specific coordinates introduced at the beginning of this section, we can identify $\Btre$
   as the closure of the image of the rational map
\begin{equation}\label{eq:B3 psi}
   \psi:\P^2\ni(x:y:z)=(x^2yz:xy^2z:xyz^2:xy(x^2-y^2):xz(x^2-z^2):yz(y^2-z^2))\in\P^5.
\end{equation}
   In particular, the image of
$$
\begin{array}{lll}
   N_1 & \mbox{\rm{ is }}& (0:0:0:0:0:1),\\
   N_2 & \mbox{\rm{ is }}& (0:0:0:0:1:0),\\
   N_3 & \mbox{\rm{ is }}& (0:0:0:1:0:0).
\end{array}
$$
   Using the explicit description provided above, we conclude that the ideal $I(\Btre)$ of the surface $\Btre$ in $\P^5$ is generated by
\[
\begin{array}{llll}
   u_1^2-u_2^2-u_0u_5, & u_0^2-u_2^2-u_1u_4, & u_2u_3-u_1u_4+u_0u_5, & u_0u_1u_3-u_0u_2u_4+u_1u_2u_5-u_3u_4u_5
\end{array}
\]
and its minimal free resolution is
$$
S/I(\Btre)\leftarrow S \leftarrow S(-2)^{\oplus 3}\oplus S(-3)\leftarrow  S(-4)^{\oplus 6}\leftarrow S(-5)^{\oplus 3}.
$$
\subsection{Osculating spaces of the $B_3$ surface}\label{ssec: B3 osculating spaces}
   In this section we show that $\Btre$ is hypo-osculating.
\begin{proposition}\label{prop: B3 is hypo-osculating}
   The surface $\Btre$ satisfies one Laplace equation.
\end{proposition}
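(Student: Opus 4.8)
The plan is to compute the second osculating space of $\Btre$ at a general point directly from the parametrization $\psi$ of \eqnref{eq:B3 psi}, and to recognize its defect as a manifestation of the unexpected quartic admitted by $Z$. Fix a general point $p\in\P^2$, away from the three lines $L_1,L_2,L_3$ and the points of $Z$; by Proposition \ref{prop: B3 as deg 7 surface} the morphism $\varphi_M$ is an isomorphism near $p$, so $\psi$ is a local immersion at $p$ and the second osculating space of $\Btre$ at $P=\psi(p)$ may be computed using the linear system $V=H^0(\P^2,\calo_{\P^2}(4)\otimes\cali(Z))$ spanned by the six quartics \eqnref{eq:deg4}. Concretely, $\Osc_P^{(2)}(\Btre)=\P\bigl(j_{2,p}(V)\bigr)$, where
$$
j_{2,p}:V\lra H^0\bigl(\P^2,\calo_{\P^2}(4)\otimes\calo_{\P^2}/\mathfrak{m}_p^{3}\bigr)
$$
evaluates $2$-jets at $p$. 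Both source and target are $6$-dimensional, so the generically expected dimension of the osculating space is $5$, attained exactly when $j_{2,p}$ is an isomorphism.

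The key observation is that $\ker j_{2,p}$ consists precisely of the quartics of $V$ vanishing to order at least $3$ at $p$, i.e.
$$
\ker j_{2,p}=H^0\bigl(\P^2,\calo_{\P^2}(4)\otimes\cali(Z)\otimes\mathfrak{m}_p^{3}\bigr).
$$
These are exactly the \emph{unexpected quartics} of $Z$ with a triple point at $p$: the naive count predicts $\max\{6-\binom{4}{2},0\}=0$ of them, whereas the $B_3$ configuration does admit such a curve. Consequently $\dim\ker j_{2,p}\geq 1$, and I would invoke the explicit (and, by \cite{FGSW}, unique) unexpected quartic through $Z$ with a triple point at $p$ to conclude that $\dim\ker j_{2,p}=1$ for general $p$. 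This gives $\dim j_{2,p}(V)=6-1=5$ and hence
$$
\dim\Osc_P^{(2)}(\Btre)=5-1=4=\min\Bigl\{\binom{2+2}{2}-1,\,5\Bigr\}-1,
$$
so $\Btre$ satisfies exactly one Laplace equation of order $2$.

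The main work lies in the two bounds on $\dim\ker j_{2,p}$. The lower bound $\geq 1$ is the existence half of the unexpected-quartic phenomenon for $Z$, already recorded in \cite{CHMN} and attributed to the $B_3$ root system; it could also be verified by exhibiting one quartic with a triple point at a general $p$, in the spirit of the cubic $f_P$ used above for the Togliatti surface. The subtler point, needed to pin $\delta$ down to exactly $1$ rather than merely $\geq 1$, is the upper bound $\dim\ker j_{2,p}\leq 1$: one must rule out a two-parameter family of triple-point quartics through $Z$ at a general $p$. By upper semicontinuity of the corank of $j_{2,p}$ it suffices to check this at a single general $p$, for instance by writing out the $6\times 6$ matrix of $j_{2,p}$ in local coordinates and exhibiting a nonvanishing $5\times 5$ minor, or by appealing to the uniqueness of the unexpected quartic. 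I expect this determinantal verification to be the only genuinely computational step; everything else is the dictionary between osculating defect and the failure of $3P$ to impose independent conditions on quartics through $Z$.
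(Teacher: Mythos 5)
Your proposal is correct, but it takes a genuinely different route from the paper. The paper's proof is a direct computation: it writes down the $6\times 6$ Hesse matrix $H_P\psi$ of second-order partials of the parametrization \eqref{eq:B3 psi} at a general point $P=(a:b:c)$, verifies $\det H_P\psi=0$ (by hand or with Singular), and then computes the exterior powers $\Lambda^i(H_P\psi)$ for $i=5,4,3$ to see that the rank is exactly $5$ generically, dropping further only along $abc=0$ and at the coordinate points. You instead translate the statement through the jet-evaluation map $j_{2,p}$ on $V=H^0(\P^2,\calo_{\P^2}(4)\otimes\cali(Z))$, identifying $\ker j_{2,p}$ with the unexpected quartics of $Z$ having a triple point at $p$; the lower bound $\dim\ker j_{2,p}\geq 1$ is then the existence of the unexpected quartic (in the paper this is Lemma~\ref{lem: the quartic}, with the explicit form \eqref{eq:quartic}), and the upper bound reduces, via semicontinuity, to a single $5\times 5$ minor check. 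Note that by the Euler relations for homogeneous quartics (characteristic zero), the rank of your full $2$-jet matrix equals the rank of the paper's matrix of pure second partials, so your ``determinantal verification'' is literally the same computation as the paper's $\Lambda^5(H_P\psi)\neq 0$; the two proofs share the same computational core but package it differently. One caveat: your appeal to \cite{FGSW} for the upper bound is slightly misdirected, since that paper's uniqueness statement concerns the configuration $Z$ admitting an unexpected quartic rather than directly asserting $h^0(\cali_{Z+3P}(4))=1$ for general $P$; the minor computation you offer as an alternative (or the uniqueness of the unexpected curve from \cite{CHMN}) is the safe way to close this step, and since you provide it, the argument is complete. What each approach buys: your dictionary makes the correspondence of Theorem~A, part~c) between unexpected curves and osculating spaces transparent and explains conceptually why $\delta=1$; the paper's brute-force computation yields more refined information, namely the full rank stratification of $H_P\psi$, which is reused in Section~\ref{ssec: B3 and Lefschetz} to match the degeneracy locus of the multiplication map $\cdot\ell^2$ and recover the failure of the SLP.
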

\proof
   Using the parametrization in \eqref{eq:B3 psi}, we determine second osculating
   spaces of $\Btre$. Let $P=(a:b:c)$ be a general point. Evaluating second order
   derivatives of $\psi$ at $P$ we get the Hesse matrix $H_P\psi$.
\begingroup
\renewcommand*{\arraystretch}{1.2}
$$\begin{array}{c|cccccc}
   & xyz^2 & yz(y^2-z^2) & xy^2z & x^2yz & xz(x^2-z^2) & xy(x^2-y^2)\\
   \hline
   \partial^2/\partial^2_x & 0 & 0 & 0 & 2bc & 6ac & 6ab \\
   \partial^2/\partial^2_y & 0 & 6bc & 2ac & 0 & 0 & -6ab \\
   \partial^2/\partial^2_z & 2ab & -6bc & 0 & 0 & -6ac & 0 \\
   \partial^2/\partial_x\partial_y & c^2 & 0 & 2bc & 2ac & 0 & 3(a^2-b^2) \\
   \partial^2/\partial_x\partial_z & 2bc & 0 & b^2 & 2ab & 3(a^2-c^2) & 0 \\
   \partial^2/\partial_y\partial_z & 2ac & 3(b^2-c^2) & 2ab & a^2 & 0 & 0
\end{array}$$
\endgroup
   It is easy to check either by hand or by a symbolic algebra computer program
   (we used Singular \cite{Singular}) that $\det H_P\psi=0$. This means that
   $\Btre$ satisfies at least $1$ Laplace equation of order $2$. Computing
   $\Lambda^i(H_P\psi)$ for $i=5,4,3$ we conclude that
   \begin{itemize}
   \item the rank of $H_P\psi$ drops by at least $2$ along the lines $abc=0$,
   \item the rank of $H_P\psi$ drops by $3$ at the $3$ coordinate points,
   \item the matrix $H_P\psi$ has always rank at least $3$.
   \end{itemize}
\endproof
\subsection{The $B_3$ surface and Lefschetz Properties}\label{ssec: B3 and Lefschetz}
   In \cite{MMO13} Mezzetti, Mir\'o-Roig and Ottaviani described a connection,
   based on Macaulay-Matlis duality (also known as apolarity),
   between projective varieties satisfying at least one Laplace equation
   and homogeneous ideals in a polynomial ring, generated by polynomials
   of equal degrees, and failing the Weak Lefschetz Property.

\begin{definition}[Weak Lefschetz Property]\label{def:WLP}
   An artinian homogeneous ideal $I$ in a polynomial ring $R=\K[x_0,\ldots,x_N]$ (i.e. an ideal such that the Krull dimension of $R/I$ is zero)
   satisfies the Weak Lefschetz Property (WLP for short), if there exists a linear form $\ell\in (R/I)_1$ such that
   for all integers $d$, the multiplication map
   \begin{equation}\label{eq:multiplication}
      \cdot\ell: \;(R/I)_{d}\ni f\to f\cdot\ell\in(R/I)_{d+1}
   \end{equation}
   has the maximal rank (i.e. the map is either injective or surjective).
\end{definition}
   Note that
   since the algebra $R/I$ is finite dimensional over the ground field $\K$ the condition
   is relevant for only finitely many values of $d$. If such an $\ell$ exists, then the
   maximal rank property holds for a general linear form (i.e. taken from a Zariski open set in the vector space $(R/I)_1$).

   There is another notion defined similarly.
\begin{definition}[Strong Lefschetz Property]
   An ideal $I$ as in Definition \ref{def:WLP} is said to satisfy the Strong Lefschetz Property (SLP for short), if there
   exists $\ell$ as above such that the multiplication map
   \begin{equation}\label{eq:multiplication2}
      \cdot\ell^k: \;(R/I)_{d}\ni f\to f\cdot\ell^k\in(R/I)_{d+k}
   \end{equation}
   has maximal rank for all $d$ and all $k\geq 1$.
\end{definition}
   An ideal $I$ is said to fail the WLP in degree $d$ if the map \eqnref{eq:multiplication}
   is not of maximal rank for all $\ell$.
\medskip

   Similarly, an ideal $I$ fails the SLP at range $k$ and degree $d$ if the map \eqnref{eq:multiplication2}
   is not of maximal rank for all $\ell$.
\medskip

   The interest in the WLP stems partly from the fact that this property imposes
   strong constraints on the Hilbert function of $I$. In fact, there is a complete
   classification of Hilbert functions of artinian algebras satisfying the WLP, \cite[Proposition 3.5]{HMNW03}.
   Many algebras are expected to have
   this property but establishing it is often very difficult. On the other hand
   ideals failing the WLP often exhibit additional peculiar properties
   of an algebraic or geometric nature or give rise to such ideals via some standard constructions,
   for example Macaulay-Matlis duality. We recall now briefly what this duality is.
   To this end let $V$ be a $\K$-vector space of dimension $N+1$ and let us identify
   the polynomial ring $R$ over $\K$ with the symmetric algebra of forms
   $$R=\oplus_{d\geq 0}\Sym^dV^*.$$
   We define the dual algebra
   $$\cald=\oplus_{d\geq 0}\Sym^dV.$$
   Let $x_0,\ldots,x_N$ and $y_0,\ldots,y_N$ be dual bases of $R$ and $\cald$, respectively.
   There is a natural pairing
   $$\Sym^jV^*\tensor \Sym^iV \ni (F,G) \lra F\cdot G \in\Sym^{i-j}V$$
   inducing on $\cald$ the structure of a graded $R$-module. We can think of this pairing as being defined
   by partial differentiation in the following way
   $$F\cdot G= F(\partial/\partial y_0,\ldots,\partial/\partial y_N) G.$$
   Usually, one does not distinguish between the coordinates in $R$ and $\cald$ as in the following Example.
\begin{example}\label{ex:MM}
   Let $F_1=x^2yz$ and $F_2=x^3y$. Let $G=(x+y)^4$. Then
   $$F_1\cdot G=\frac{\partial^4}{\partial^2_x\partial_y\partial_z}G=0,$$
   $$F_2\cdot G=\frac{\partial^4}{\partial^3_x\partial_y}(x^4+4x^3y+6x^2y^2+4xy^3+y^4)=4.$$
\end{example}
\begin{definition}[Macaulay inverse system]
   Let $I\subset R$ be a homogeneous ideal. The Macaulay inverse system $I^{-1}$ for $I$ is
   $$I^{-1}=\left\{G\in \cald:\; F\cdot G=0\mbox{ for all }F\in I\right\}.$$
\end{definition}
   For the convenience of the reader, we recall the following fundamental result
   of Mezzetti, Mir\'o-Roig and Ottaviani \cite[Theorem 3.2]{MMO13}.
\begin{theorem}\label{thm:tea}
   For an artinian ideal $I\subset R$ generated by $r\leq\binom{N+d-1}{N-1}$ forms $F_1,\ldots, F_r$ of degree $d$
   the following conditions are equivalent:
   \begin{itemize}
   \item[i)] the ideal $I$ fails the WLP in degree $d-1$;
   \item[ii)] The forms $F_1,\ldots,F_r$ become linearly dependent when restricted to a general hyperplane $H$ in $\P^N$;
   \item[iii)] The image $X$ of $\P^N$ under the morphism defined by the inverse system $I^{-1}$ satisfies at least
   one Laplace equation of order $d-1$.
   \end{itemize}
\end{theorem}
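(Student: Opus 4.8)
\proof
The plan is to establish the two equivalences (i)$\Leftrightarrow$(ii) and (ii)$\Leftrightarrow$(iii) separately: the first is elementary linear algebra on the multiplication map, while the second runs through the apolarity pairing relating the inverse system $I^{-1}$ to the osculating spaces of $X$.

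For (i)$\Leftrightarrow$(ii), I would first record that, since $I$ is generated in degree $d$, one has $(R/I)_{d-1}=R_{d-1}$ and $(R/I)_d=R_d/\langle F_1,\dots,F_r\rangle$, so that by Pascal's identity $\dim(R/I)_d-\dim(R/I)_{d-1}=\binom{N+d-1}{N-1}-r\ge 0$. Thus the target of $\cdot\ell\colon(R/I)_{d-1}\to(R/I)_d$ is at least as large as the source and ``maximal rank'' means ``injective''. The kernel of this map is $\{g\in R_{d-1}:\ell g\in\langle F_1,\dots,F_r\rangle\}$, which is nonzero exactly when some nontrivial combination $\sum_i a_iF_i$ is divisible by $\ell$; taking the $F_i$ linearly independent, this says precisely that the restrictions $\overline{F_1},\dots,\overline{F_r}$ to the hyperplane $H=\{\ell=0\}$ are linearly dependent. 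Since injectivity of $\cdot\ell$ is a Zariski-open condition on $\ell$, ``fails for every $\ell$'' (the content of (i)) and ``fails for the general $\ell$'' (the content of (ii)) coincide, which settles (i)$\Leftrightarrow$(ii).

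For (ii)$\Leftrightarrow$(iii) I would use the pairing $\Sym^dV^*\tensor\Sym^dV\to\K$. Here $(I^{-1})_d$ is the annihilator $\langle F_1,\dots,F_r\rangle^{\perp}\subseteq\Sym^dV$, of dimension $s=\binom{N+d}{N}-r$, and a basis $g_1,\dots,g_s$ defines the morphism $\varphi$ with image $X$. The hypothesis $r\le\binom{N+d-1}{N-1}$ forces $s-1\ge\binom{N+d-1}{N}-1$, so the expected value of $\dim\Osc^{(d-1)}_P(X)$ is $\binom{N+d-1}{N}-1$, and (iii) asks that this number drop at a general point. The crucial observation is that, for $P=\varphi([c])$, a hyperplane of $\P^{s-1}$ contains $\Osc^{(d-1)}_P(X)$ if and only if its pullback $G_a=\sum_j a_jg_j$ vanishes to order $\ge d$ at $[c]$, i.e. $G_a\in\frak{m}_{[c]}^{[d]}$, the space of degree-$d$ forms having a point of multiplicity $\ge d$ at $[c]$. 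A dimension count then gives $\dim\Osc^{(d-1)}_P(X)=s-1-\dim W_c$, where $W_c=(I^{-1})_d\cap\frak{m}_{[c]}^{[d]}$.

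Finally I would compute $\dim W_c$ by duality. One checks that $(\frak{m}_{[c]}^{[d]})^{\perp}=\ell_c\cdot R_{d-1}$, where $\ell_c\in R_1$ is the linear form corresponding to $[c]$: indeed for $g\in R_{d-1}$ and $G\in\frak{m}_{[c]}^{[d]}$ one has $(\ell_c g)\cdot G=(g\cdot G)(c)=0$, since $g\cdot G$ is an order-$(d-1)$ derivative of a form of multiplicity $\ge d$ at $[c]$, and equality of the two spaces follows by comparing dimensions. The identity $\dim(A^{\perp}\cap B)=\dim B-\dim A+\dim(A\cap B^{\perp})$ with $A=\langle F_1,\dots,F_r\rangle$ and $B=\frak{m}_{[c]}^{[d]}$ then yields $\dim W_c=\binom{N+d-1}{N-1}-r+\rho$, where $\rho=\dim\big(\langle F_1,\dots,F_r\rangle\cap\ell_c R_{d-1}\big)$ is exactly the number of linear dependencies among the $\overline{F_i}$ on $\{\ell_c=0\}$. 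Hence $\dim\Osc^{(d-1)}_P(X)$ falls below the expected $\binom{N+d-1}{N}-1$ precisely when $\rho>0$, that is, precisely when (ii) holds for $H=\{\ell_c=0\}$; letting $[c]$ be general completes (ii)$\Leftrightarrow$(iii). The hard part will be the crucial observation of the third paragraph, which is where Macaulay--Matlis duality between the jets of $\varphi$ and the inverse system $I^{-1}$ really enters; one must also verify that $\varphi$ is nondegenerate with $N$-dimensional image so that the osculating-space count is the intended one.
\endproof
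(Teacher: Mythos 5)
Your proposal is correct, but the first thing to note is that the paper itself contains no proof of this statement: Theorem \ref{thm:tea} is recalled verbatim, ``for the convenience of the reader,'' from \cite[Theorem 3.2]{MMO13}, so the only meaningful comparison is with the original source. Your argument is in essence the standard MMO13 proof: the equivalence (i)$\Leftrightarrow$(ii) via the observation that $r\leq\binom{N+d-1}{N-1}$ forces maximal rank to mean injectivity of $\cdot\ell\colon (R/I)_{d-1}\to(R/I)_d$, whose kernel elements correspond exactly to combinations $\sum a_iF_i$ divisible by $\ell$; and (ii)$\Leftrightarrow$(iii) via apolarity, with the key computation $(\frak{m}_{[c]}^{[d]})^{\perp}=\ell_c\cdot R_{d-1}$ (your differentiation check and dimension count here are both right) feeding into the rank identity that gives $\dim\Osc^{(d-1)}_P(X)=\binom{N+d-1}{N}-1-\rho$. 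Two points deserve to be made explicit rather than parenthetical. First, the whole of (i)$\Leftrightarrow$(ii) and the count $\dim A=r$ need $F_1,\ldots,F_r$ linearly independent; you do say ``taking the $F_i$ linearly independent,'' and this is indeed implicit in the statement (otherwise (ii) holds vacuously), but it should be stated as a standing hypothesis, as it effectively is in \cite{MMO13}. Second, the issue you defer at the end --- that $\varphi$ is generically finite, so that $X$ has dimension $N$ and the expected osculating dimension is $\binom{N+d-1}{N}-1$ rather than a count for a lower-dimensional image --- is not a removable formality: in \cite{MMO13} the $N$-dimensionality of $X$ is built into the formulation of condition (iii), and your computation of jets along the parametrization only identifies the intrinsic osculating spaces of $X$ at a general (immersive) point under this hypothesis. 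Also note that $\varphi$ is in general only a rational map (in the paper's own $B_3$ example the system $J^{-1}$ has base locus $Z$), which is harmless for your argument since everything is evaluated at a general $[c]$, but the word ``morphism'' in the statement should not be taken literally. With these caveats spelled out, your proof is complete and matches the route of the cited original.
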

   The points in $Z$ determine in the dual projective plane a $B_3$ arrangement of lines, which is visualized
   in Figure \ref{fig:B3}. The missing ninth line
   is the line at infinity, $z=0$.
\begin{figure}[h]
\begin{center}
\begin{tikzpicture}[line cap=round,line join=round,>=triangle 45,x=1.0cm,y=1.0cm,scale=0.9]
\clip(-3,-3) rectangle (3,3);
\draw [line width=0.8pt] (0.,-8.491111111111087) -- (0.,6.508888888888871);
\draw [line width=0.8pt] (-1.,-8.491111111111087) -- (-1.,6.508888888888871);
\draw [line width=0.8pt] (1.,-8.491111111111087) -- (1.,6.508888888888871);
\draw [line width=0.8pt,domain=-9.351111111111113:10.328888888888894] plot(\x,{(-0.-0.*\x)/1.});
\draw [line width=0.8pt,domain=-9.351111111111113:10.328888888888894] plot(\x,{(-1.-0.*\x)/1.});
\draw [line width=0.8pt,domain=-9.351111111111113:10.328888888888894] plot(\x,{(--1.-0.*\x)/1.});
\draw [line width=0.8pt,domain=-9.351111111111113:10.328888888888894] plot(\x,{(-0.-1.*\x)/1.});
\draw [line width=0.8pt,domain=-9.351111111111113:10.328888888888894] plot(\x,{(-0.-1.*\x)/-1.});
\end{tikzpicture}
\caption{   $xyz(x^2-y^2)(y^2-z^2)(z^2-x^2)$}
\label{fig:B3}
\end{center}
\end{figure}
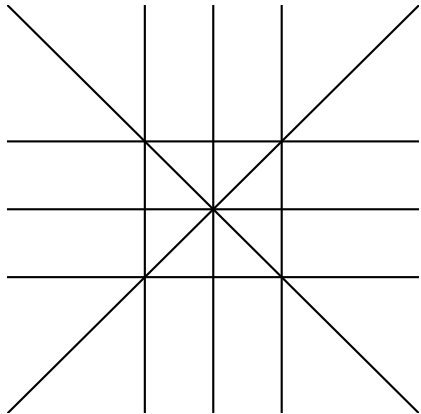
   We consider the artinian ideal $J$ generated by the fourth powers of linear
   forms defining the lines in the $B_3$ arrangement
   $$J=\langle x^4, y^4, z^4, (x-y)^4, (x+y)^4, (x-z)^4, (x+z)^4, (y-z)^4, (y+z)^4\rangle.$$
   Theorem \ref{thm:tea} does not apply directly to our situation because the
   number of generators $9$ exceeds the upper bound in its assumptions. However,
   as we shall see in a moment, condition iii) is satisfied with the order $d-2$ in place of $d-1$. Thus it is natural
   to wonder if this property is reflected in the way $J$ fails some of Lefschetz Properties.
   To this end we compute first (as in Example \ref{ex:MM}) the Macaulay-Matlis
   dual of $J$:
   $$J^{-1}=\langle x^2yz, xy^2z, xyz^2, xy(x^2-y^2), xz(x^2-z^2), yz(y^2-z^2)\rangle.$$
   Thus $J^{-1}=\langle(I(Z))_4\rangle$ by \eqref{eq:deg4}. Hence the morphism determined by $J^{-1}$
   is exactly the morphism $\psi$ defined in \eqref{eq:B3 psi}. By Proposition \ref{prop: B3 is hypo-osculating}
   the image $\Btre$ of the inverse system satisfies one Laplace equation of order $2$.

   It is convenient to write down explicitly all graded pieces of the artinian algebra $A=R/J$.
   For $i=0,\ldots,3$ we have $A_i=(R/J)_i=M^i$, where $M=\langle x,y,z\rangle$ is the
   maximal ideal and
   $$A_4=\langle xy^3, yz^3, xz^3,xyz^2, xy^2z, x^2yz\rangle.$$
   All other graded pieces of $A$ are zero.

   Thus the only interesting multiplication maps are those going to $A_4$.
   Taking as a basis of $A_2$ the monomials $x^2, y^2, z^2, xy, xz, yz$
   and a general linear form $\ell=ax+by+cz$, we can write down explicitly
   the matrix of the multiplication map
   $$\cdot \ell^2: A_2\lra A_4$$
   as follows
\begingroup
\renewcommand*{\arraystretch}{1.2}
$$\begin{array}{c|cccccc}
   & xy^3 & xz^3 & yz^3 & xyz^2 & xy^2z & x^2yz\\
   \hline
   x^2 & -2ab & -2ac & 0    & 0   & 0     & 2bc \\
   y^2 & 2ab  & 0    & -2bc & 0   & 2ac   & 0 \\
   z^2 & 0    & 2ac  & 2bc  & 2ab & 0     & 0 \\
   xy & b^2-a^2 & 0 & 0 & c^2 & 2bc & 2ac \\
   xz & 0 & c^2-a^2 & 0 & 2bc & b^2 & 2ab \\
   yz & 0 & 0 & c^2-b^2 & 2ac & 2ab & a^2
\end{array}$$
\endgroup
   This matrix has rank at most $5$ and at least $3$ for any value of $a,b,c$. The stratification
   of the degeneracy locus of $\cdot \ell^2$ is in fact exactly the same as that of $\psi$
   as described in section \ref{ssec: B3 osculating spaces}. Thus we have recovered \cite[Proposition 7.3]{DIV14}.
\begin{proposition}
   The ideal $J$ fails the SLP in range $2$ at degree $2$.
\end{proposition}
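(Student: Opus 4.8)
The plan is to reduce the statement to a single multiplication map. First I would note that, for the artinian algebra $A=R/J$, the only nonzero graded pieces are $A_0,\ldots,A_4$, and that the displayed bases give $\dim_\K A_2=\dim_\K A_4=6$. Hence testing the Strong Lefschetz Property at range $k=2$ and degree $d=2$ amounts to asking whether the square matrix representing $\cdot\ell^2:A_2\to A_4$ is invertible for some linear form $\ell$. Since source and target have equal dimension, maximal rank here means rank $6$, i.e.\ that $\cdot\ell^2$ is an isomorphism, which in turn is equivalent to the non-vanishing of the determinant of the $6\times 6$ matrix written down above with $\ell=ax+by+cz$.

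The heart of the argument is then to show that this determinant vanishes identically in $a,b,c$. The quickest route is a direct computation on the displayed matrix, entirely parallel to the verification that $\det H_P\psi=0$ in the proof of Proposition \ref{prop: B3 is hypo-osculating}; indeed, under the identification of the linear form $\ell=ax+by+cz$ with the point $P=(a:b:c)$, Macaulay--Matlis duality between $J$ and $J^{-1}=\langle(I(Z))_4\rangle$ makes the multiplication-by-$\ell^2$ matrix dual to the second-order jet of the parametrization $\psi$. Consequently the two matrices share the same rank at every $P$, and the degeneracy locus of $\cdot\ell^2$ coincides with that of the Hesse matrix $H_P\psi$ analyzed in Section \ref{ssec: B3 osculating spaces}. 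Since that matrix has rank at most $5$ for every $P$, so does the multiplication matrix.

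From here the conclusion is immediate: for every linear form $\ell$ the map $\cdot\ell^2:A_2\to A_4$ has rank at most $5<6$, hence is never an isomorphism, and so $J$ fails the SLP in range $2$ at degree $2$. The main obstacle is conceptual rather than computational, and lies in the middle step. Because $J$ has $9>\binom{N+d-1}{N-1}=5$ generators, Theorem \ref{thm:tea} does not apply on the nose, so one cannot simply quote the equivalence between failure of the WLP in degree $d-1$ and a Laplace equation of order $d-1$. The point to make precise is that the relevant shift is by two degrees: the order-$2$ (rather than order-$3$) Laplace equation satisfied by $\Btre$ is reflected in the failure of the strong, $\ell^2$-version of the Lefschetz map at degree $2$, rather than in the weak, $\ell$-version at degree $3$.
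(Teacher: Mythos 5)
Your argument is correct, and its skeleton is the paper's: the claim concerns the single map $\cdot\ell^2\colon A_2\to A_4$ between $6$-dimensional spaces, so everything rests on showing that the corresponding $6\times 6$ matrix is singular for \emph{every} $\ell=ax+by+cz$. The paper settles this by direct computation --- it displays the matrix and checks (by hand or in Singular) that its rank is at most $5$ for all values of $a,b,c$, merely \emph{remarking} that the degeneracy stratification agrees with that of $\psi$. You instead make that remark the mechanism of the proof: by apolarity the transpose of $\cdot\ell^2$ is the contraction $\ell^2(\partial)\colon (J^{-1})_4\to\cald_2$, and in monomial bases the matrix of this contraction has entries which are nonzero constant multiples of the second partials of the six quartics in \eqref{eq:deg4} evaluated at $P=(a:b:c)$; it is therefore $H_P\psi$ up to transposition, nonzero row rescalings and an invertible identification of bases, so the two matrices have the same rank at every $P$, and Proposition \ref{prop: B3 is hypo-osculating} --- which gives $\det H_P\psi=0$ for \emph{all} $P$, not just general ones, exactly what the quantifier ``for all $\ell$'' in the definition of failing the SLP requires --- finishes the argument. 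What this buys is a cleaner derivation: one determinant computation serves both the Laplace equation and the SLP failure, and your closing observation about the degree shift (an order-$2$ Laplace equation reflected in the $\ell^2$-map at degree $2$, outside the hypotheses of Theorem \ref{thm:tea} since $J$ has $9>5$ generators) matches the paper's discussion. The only point to tighten is that ``dual to the second-order jet of the parametrization'' is asserted rather than verified; writing out the one-line identity $\langle \ell^2 g, F\rangle=\langle g,\ell^2(\partial)F\rangle$ together with the (harmless, characteristic-zero) factorial rescaling of rows makes the step airtight.
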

\begin{remark}
   The equivalence between the failure of the SLP in range $2$ and degree $d-2$
   and the existence of a non-trivial Laplace equation for the image
   of $\P^2$ under the mapping determined by the inverse system of $J$
   has been observed recently also by Di Gennaro and Ilardi in \cite[Corollary 25]{DigIla18}
   and in a more general setting by Di Gennaro, Ilardi and Vall\`es
   in \cite[Theorem 5.1]{DIV14}.
\end{remark}
\subsection{Unexpected quartics, the $B_3$ surface and BMSS duality}
   The surface $\Btre$ is of special interest, because in contrast to the Shifrin and the Togliatti surfaces,
   its second osculating space at a general point cuts out an irreducible divisor. Additionally,
   the BMSS duality (see \cite[Section 4]{HMNT}) associates to $\Btre$ a companion surface $\Btre'$,
   which is also a Togliatti-type surface. We describe these phenomena in this section.

   To begin with we identify the hypo-osculating divisors.
\begin{lemma}\label{lem: the quartic}
   Let $P=(a:b:c)$ be a general point in $\P^2$. Then the quartic
   \begin{align*}
      f_P(x:y:z) ={} & 3a(b^2-c^2)\cdot x^2yz +3b(c^2-a^2)\cdot xy^2z +3c(a^2-b^2)\cdot xyz^2\\
                & + a^3\cdot yz(y^2-z^2) -b^3\cdot xz(x^2-z^2) +c^3\cdot xy(x^2-y^2) \numberthis\label{eq:quartic}
   \end{align*}
   vanishes at all points of $Z$ and has a triple point at $P$.
\end{lemma}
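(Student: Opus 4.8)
The plan is to check the two assertions separately: the vanishing on $Z$ will be immediate, while the triple point will reduce to a single kernel computation against the Hesse matrix $H_P\psi$ already recorded in Proposition \ref{prop: B3 is hypo-osculating}. For the first assertion I would simply note that $f_P$ is, by its very definition, a $\K$-linear combination of the six quartics appearing in \eqref{eq:deg4}; since the common zero locus of those quartics is exactly $Z$, the form $f_P$ vanishes at all nine points of $Z$ for every value of $P$, with no computation needed.

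For the triple point I would use the following reduction. For a homogeneous form $f$ of degree $4$, Euler's identity $x\,\partial f/\partial x + y\,\partial f/\partial y + z\,\partial f/\partial z = 4f$, together with its analogues obtained by applying it to each first partial, expresses the value and all first partials of $f$ at $P$ in terms of the second partials at $P$. Consequently $\mult_P(f_P)\geq 3$ is equivalent to the vanishing at $P$ of all six second-order partial derivatives of $f_P$, which is in turn exactly the statement that the coefficient vector $v$ of $f_P$, read off in the column order fixed in Proposition \ref{prop: B3 is hypo-osculating}, lies in the kernel of $H_P\psi$.

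It then remains to verify $H_P\psi\cdot v=0$ with
\[
  v=\bigl(3c(a^2-b^2),\; a^3,\; 3b(c^2-a^2),\; 3a(b^2-c^2),\; -b^3,\; c^3\bigr).
\]
Each of the six inner products factors out a common monomial ($6abc$ for the three pure-second-derivative rows and $6a$, $6b$, or $6c$ for the three mixed rows) and collapses to zero by one of two elementary identities: a telescoping cancellation of the type $(b^2-c^2)-b^2+c^2=0$ for the pure rows, and the Vandermonde-type cyclic identity $a^2(b^2-c^2)+b^2(c^2-a^2)+c^2(a^2-b^2)=0$ for the mixed rows. This gives $\mult_P(f_P)\geq 3$; since $f_P$ is nonzero for general $P$ (its coefficient of $yz(y^2-z^2)$ is $a^3\neq 0$) and a quartic with a point of multiplicity $4$ is a union of four lines through that point, whereas the nine points of $Z$ lie on nine distinct lines through a general $P$, the multiplicity is in fact exactly $3$ and $P$ is a genuine triple point.

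I do not expect any real obstacle beyond bookkeeping of signs and of the column ordering inherited from $H_P\psi$: once the equivalence between multiplicity $\geq 3$ and the vanishing of the second partials is in hand, the computation is short. It is worth flagging, though, that $v$ is not arbitrary: Proposition \ref{prop: B3 is hypo-osculating} shows $H_P\psi$ has corank exactly $1$ at a general $P$, so its kernel is one-dimensional and $f_P$ is, up to scale, the unique quartic with a triple point at $P$. One could thus alternatively produce $v$ as any nonzero column of the adjugate of $H_P\psi$.
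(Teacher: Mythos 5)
Your proof is correct, but it takes a genuinely different route from the paper: the paper's own ``proof'' of Lemma \ref{lem: the quartic} is purely a citation, attributing the existence of $f_P$ to \cite[Proposition 7.3]{DIV14} and \cite{CHMN} and the explicit equation to \cite[Section 2]{BMSS}, with no verification given. Your argument is self-contained, and I checked it against the paper's data. The vanishing on $Z$ is indeed immediate, since $f_P$ is by construction a linear combination of the six quartics \eqref{eq:deg4} generating $H^0(\P^2,\calo_{\P^2}(4)\otimes\cali(Z))$, whose common zero locus is exactly $Z$. The reduction of $\mult_P(f_P)\geq 3$ to the vanishing of the six second partials via iterated Euler identities is valid (we are in characteristic zero), and your coefficient vector $v=\bigl(3c(a^2-b^2),\,a^3,\,3b(c^2-a^2),\,3a(b^2-c^2),\,-b^3,\,c^3\bigr)$ correctly matches the column order $xyz^2$, $yz(y^2-z^2)$, $xy^2z$, $x^2yz$, $xz(x^2-z^2)$, $xy(x^2-y^2)$ of the table in Proposition \ref{prop: B3 is hypo-osculating}; the three pure rows of $H_P\psi\cdot v$ do vanish by telescoping after extracting $6abc$, and the three mixed rows reduce, after extracting $6c$, $6b$, $6a$ respectively, to the cyclic identity $a^2(b^2-c^2)+b^2(c^2-a^2)+c^2(a^2-b^2)=0$. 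Your exclusion of multiplicity $4$ is also sound (a quartic with a $4$-fold point at $P$ is a union of four lines through $P$, which for general $P$ can contain at most four of the nine points of $Z$ since the lines $\overline{PP_i}$ are then pairwise distinct) and is a point the paper's citation-style proof does not address at all, even though ``triple point'' means multiplicity exactly $3$. What each approach buys: the paper's route is short and records the provenance of the formula; yours makes the lemma verifiable on the spot and, as a bonus, makes explicit the mechanism behind part c) of Theorem A --- the coefficient vector of the unexpected quartic spans the kernel of $H_P\psi$, which has corank $1$ at a general point by Proposition \ref{prop: B3 is hypo-osculating}, so your adjugate remark simultaneously yields uniqueness of the unexpected quartic with triple point at general $P$ up to scalar.
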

\proof
   The existence of $f_P$ has been observed indirectly in \cite[Proposition 7.3]{DIV14}.
   This observation has been repeated explicitly in \cite{CHMN}. In fact it has been the starting
   point of research presented in this article. The explicit equation of $f_P$ was found in \cite[Section 2]{BMSS}.
\endproof
   It has been also observed in \cite{BMSS} that the equation \eqref{eq:quartic} can
   be written down as a cubic in $(a:b:c)$ variables. More precisely we have the following.
\begin{lemma}
   Let $Q=(x:y:z)$ be a general point in $\P^2$ with coordinates $(a:b:c)$.
   Then the cubic
   \begin{align*}
      g_Q(a:b:c)={} & yz(y^2-z^2)\cdot a^3 +xz(z^2-x^2)\cdot b^3 +xy(x^2-y^2)\cdot c^3   \\
                & +3x^2yz\cdot a(b^2-c^2) -3xy^2z\cdot b(a^2-c^2) +3xyz^2\cdot c(a^2-b^2)
                \numberthis\label{eq:quartic-dual}
   \end{align*}
   has a triple point at $Q$.
\end{lemma}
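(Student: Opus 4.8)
The plan rests on the observation that the two displayed formulas are two readings of one and the same polynomial. Matching the coefficients of \eqref{eq:quartic} and \eqref{eq:quartic-dual} term by term — a computation of a couple of lines — shows that both equal the bihomogeneous form
$$F(x,y,z;a,b,c) := 3a(b^2-c^2)x^2yz + 3b(c^2-a^2)xy^2z + 3c(a^2-b^2)xyz^2 + a^3\,yz(y^2-z^2) - b^3\,xz(x^2-z^2) + c^3\,xy(x^2-y^2),$$
which is of bidegree $(4,3)$ in the two groups of variables $(x,y,z)$ and $(a,b,c)$. Freezing $(a,b,c)$ and reading $F$ as a quartic in $(x,y,z)$ recovers $f_P$, while freezing $(x,y,z)$ and reading $F$ as a cubic in $(a,b,c)$ recovers $g_Q$. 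Thus the present statement is the mirror image of Lemma \ref{lem: the quartic} under interchange of the two slots, and the entire task is to transport the triple-point condition from the $(x,y,z)$-slot to the $(a,b,c)$-slot. This is exactly the symmetry underlying the BMSS duality invoked in this subsection.

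The mechanism for the transport is differentiation along the diagonal. Lemma \ref{lem: the quartic} says that for a general — hence, by density of general points, for \emph{every} — point $P=(a:b:c)$, all partials of $F$ in $(x,y,z)$ of order $\le 2$ vanish at $(x,y,z)=(a,b,c)$. Restricting to the diagonal $\Delta=\{x=a,\;y=b,\;z=c\}$, these become polynomial identities $(\partial^{\alpha}_{(x,y,z)}F)(x,y,z;x,y,z)\equiv 0$ for $|\alpha|\le 2$. The case $|\alpha|=0$ already gives the crucial identity $F(x,y,z;x,y,z)\equiv 0$. Differentiating it with respect to $x$ and applying the chain rule yields
$$\Big(\frac{\partial F}{\partial x}\Big)\Big|_{\Delta} + \Big(\frac{\partial F}{\partial a}\Big)\Big|_{\Delta} = 0,$$
and since the first summand vanishes by the order-$1$ part of Lemma \ref{lem: the quartic}, so must the second; the same with $y,z$ shows that all first $(a,b,c)$-derivatives of $F$ vanish on $\Delta$. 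This settles the vanishing of $g_Q$ and of its gradient at $Q$.

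To reach order two I would iterate the same trick. Differentiating the order-$1$ identities $(\partial_{x_i}F)|_{\Delta}\equiv 0$ and feeding in the order-$2$ part of Lemma \ref{lem: the quartic} shows that every \emph{mixed} second derivative of $F$ — one slot in each group — vanishes on $\Delta$. Differentiating next the just-obtained identities $(\partial_{a_i}F)|_{\Delta}\equiv 0$ and substituting this mixed-derivative vanishing isolates precisely the pure second $(a,b,c)$-derivatives, which are thereby forced to vanish on $\Delta$ as well. Collecting the three layers, $g_Q$ together with all its $(a,b,c)$-partials of order $\le 2$ vanishes at $(a,b,c)=(x,y,z)$, i.e. $g_Q$ has a triple point at $Q$.

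The only delicate point — and the place where the argument could fail if rushed — is the \emph{order} of the bootstrap: the pure second-order vanishing in the second group is not available directly but rests on the mixed second-order vanishing, which in turn rests on the order-$2$ statement of Lemma \ref{lem: the quartic}; one must also record that ``general $P$'' upgrades to ``all $P$'' so that the relevant restrictions to $\Delta$ are genuine polynomial identities and may be differentiated. Anyone preferring to bypass the symmetry argument can of course verify the conclusion by the same brute-force route already used for $f_P$: form the $(a,b,c)$-gradient and $(a,b,c)$-Hessian of $g_Q$, substitute $(a,b,c)=(x,y,z)$, and check the vanishing directly in a symbolic algebra system such as Singular.
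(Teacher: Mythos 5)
Your proof is correct, but it takes a genuinely different route from the paper's. The paper disposes of this lemma with the single sentence ``It is a straightforward calculation'' --- that is, it verifies the claim directly (in effect, computing the $(a,b,c)$-partials of $g_Q$ of order $\le 2$ and checking their vanishing at $Q$), which is exactly the brute-force fallback you mention in your closing paragraph. You instead \emph{derive} the statement from Lemma~\ref{lem: the quartic}: your starting observation is correct --- matching coefficients shows that \eqref{eq:quartic} and \eqref{eq:quartic-dual} are one and the same bihomogeneous form $F(x,y,z;a,b,c)$ of bidegree $(4,3)$ --- and your transport mechanism is sound: the ``general $P$'' statement of Lemma~\ref{lem: the quartic} upgrades to polynomial identities $(\partial^\alpha_{(x,y,z)}F)|_\Delta\equiv 0$ for $|\alpha|\le 2$ on the diagonal $\Delta$, which may then be differentiated, and the chain-rule bootstrap $0=\partial_{x}\bigl(F|_\Delta\bigr)=(\partial_{x}F)|_\Delta+(\partial_{a}F)|_\Delta$ and its iterates correctly deliver first the mixed and then the pure second $(a,b,c)$-derivatives; the ordering issue you flag is real and you handle it in the right sequence. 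What your approach buys is a structural explanation: it exhibits the present lemma as a formal consequence of Lemma~\ref{lem: the quartic} via the symmetry of $F$, which is precisely the BMSS duality this subsection is about, rather than an independent coincidence to be checked. What the paper's direct verification buys is brevity and logical independence from Lemma~\ref{lem: the quartic}, whose own proof in the paper is only a citation to \cite{BMSS} and \cite{DIV14}. One small point your argument leaves implicit: the bootstrap yields multiplicity $\ge 3$ at $Q$; to conclude a \emph{triple} point one should note that $g_Q\not\equiv 0$ for general $Q$ (the six cubics $a^3,\ldots,c(a^2-b^2)$ are linearly independent and their quartic coefficients do not all vanish at a general $(x:y:z)$), since a nonzero cubic has multiplicity at most $3$ at any point --- but the paper's statement is naturally read the same way, so this is cosmetic.
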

\proof
   It is a straightforward calculation.
\endproof
   The interplay between $f_P$ and $g_Q$ has been studied in a much more general
   setting in \cite[Section 4]{HMNT}. Here we take a slightly different point of view.
\begin{proposition}\label{prop:B3' eta}
   Let $I'=\langle a^3, b^3, c^3, a(b^2-c^2), b(a^2-c^2), c(a^2-b^2)\rangle$ and let
   $$\eta:\P^2\ni(a:b:c)\lra (a^3: b^3: c^3: a(b^2-c^2): b(a^2-c^2): c(a^2-b^2))\in\P^5$$
   be the associated rational map. Then
   \begin{itemize}
   \item $\eta$ is a morphism (i.e. the ideal $I'$ is artinian),
   \item the image of $\eta$ is a smooth surface $\Btre'$ of degree $9$,
   \item the ideal $I(\Btre')$ has the following minimal free resolution
   \end{itemize}
   $$
   S/I(\Btre')\leftarrow S \leftarrow S(-3)^{\oplus 7}\oplus S(-4)^{\oplus 2}\leftarrow  S(-4)^{\oplus 3}\oplus S(-5)^{\oplus 19}\leftarrow S(-6)^{\oplus 21} \leftarrow S(-7)^{\oplus 8}\leftarrow S(-8).
   $$
\end{proposition}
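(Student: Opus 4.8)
The plan is to dispatch the three assertions in turn: the first is immediate, and the last two rest on an explicit description of the homogeneous ideal $I(\Btre')$.

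For the first bullet, note that $I'$ contains the forms $a^3,b^3,c^3$, so $a,b,c\in\sqrt{I'}$ and hence $\sqrt{I'}=\langle a,b,c\rangle$. Thus $I'$ is $\langle a,b,c\rangle$-primary, $\C[a,b,c]/I'$ is artinian, and the six cubics have no common zero in $\P^2$; consequently the base locus of $\eta$ is empty and $\eta$ is a morphism. I would then compute $I(\Btre')$ as the kernel of the graded $\C$-algebra homomorphism $\Phi:S\to\C[a,b,c]$ sending $u_0,\ldots,u_5$ to the six generators of $I'$, which is obtained by eliminating $a,b,c$ from $\langle u_0-a^3,\ldots,u_5-c(a^2-b^2)\rangle$ through a Gröbner basis computation (the same kind of computation in Singular as used elsewhere in the paper). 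This yields the $7$ cubic and $2$ quartic generators recorded by the first syzygy module $S(-3)^{\oplus 7}\oplus S(-4)^{\oplus 2}$ of the stated resolution.

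For the degree I would show that $\eta$ is birational onto its image. The first three coordinates recover $(a:b:c)$ up to the action $(a:b:c)\mapsto(a:\mu b:\nu c)$ with $\mu^3=\nu^3=1$, and comparing the remaining coordinates forces $\mu=\nu=1$ at a general point: for instance the $u_3$-coordinate gives $(1-\mu^2)b^2=(1-\nu^2)c^2$, which for general $b,c$ forces $\mu^2=\nu^2=1$ and hence $\mu=\nu=1$. Since $\eta$ is given by a base-point-free linear subsystem of $|\O_{\P^2}(3)|$ and is birational onto $\Btre'$, the degree of $\Btre'$ equals $(\eta^*H)^2=3^2=9$; equivalently one reads this off the leading coefficient $\tfrac{9}{2}t^2$ of the Hilbert polynomial of $S/I(\Btre')$.

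Smoothness is the heart of the matter and the step I expect to be the main obstacle. Since $\Btre'$ has codimension $3$ in $\P^5$, the Jacobian criterion identifies its singular locus with the closed subscheme cut out by $I(\Btre')$ together with the ideal of $3\times 3$ minors of the Jacobian matrix of a generating set. The plan is to compute these minors symbolically and verify that, together with $I(\Btre')$, they generate an irrelevant ideal, so that the singular locus is empty. The points requiring the most care are the images of the nine $B_3$ points $P_1,\ldots,P_9$ distinguished in Theorem B(b): at a general point the rank of the Jacobian of the parametrization is maximal and smoothness is immediate, whereas at these finitely many special points smoothness must be read from the ideal $I(\Btre')$ directly rather than from the parametrization, by the same stratification of the degeneracy locus used in the proof of Proposition \ref{prop: B3 is hypo-osculating}. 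Carrying out this minor computation and confirming the behaviour at the $B_3$ points is where I expect the real work to lie. Finally, the minimal free resolution of $S/I(\Btre')$ is produced directly from $I(\Btre')$ by a computer algebra system, and reading off the graded Betti numbers yields the displayed resolution; I note in passing that its length is $5$, so by the Auslander--Buchsbaum formula $S/I(\Btre')$ has depth $1$ and $\Btre'$ is not arithmetically Cohen--Macaulay, consistent with the shape of the resolution.
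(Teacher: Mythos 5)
Your proposal is correct, and it is essentially an expanded version of the paper's proof, which consists of the single sentence that all three claims ``are easy to check by a symbolic algebra program.'' Like the paper, you ultimately rest the hard parts --- computing $I(\Btre')$ by elimination, verifying emptiness of the locus of the $3\times 3$ Jacobian minors, and extracting the graded Betti numbers --- on a computer algebra computation, so the core approach is the same. What you add beyond the paper are two genuine hand proofs: the artinianity of $I'$ via $a^3,b^3,c^3\in I'$ (hence $\sqrt{I'}=\langle a,b,c\rangle$), and the degree computation via generic injectivity of $\eta$ (the cube-root-of-unity analysis using the $u_3$-coordinate is correct) together with $(\eta^*\O_{\P^5}(1))^2=9$; these replace opaque computer output with verifiable arguments and are a worthwhile improvement in rigor. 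One small imprecision: your worry about the images of the nine $B_3$ points conflates first- and second-order behavior --- the degeneracy stratification in Proposition \ref{prop: B3 is hypo-osculating} and in Proposition \ref{prop:B3' hypo osculating} concerns the \emph{Hessian} (second osculating spaces), whereas smoothness is governed by the first-order Jacobian, which need not drop rank at those points; this does not damage your argument, since the global minors computation you propose covers all points uniformly, but the appeal to that stratification is not the right justification. Your closing observation that the resolution has length $5$, so $\operatorname{depth} S/I(\Btre')=1$ by Auslander--Buchsbaum and $\Btre'$ is not arithmetically Cohen--Macaulay, is correct and is not in the paper.
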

\proof
   These claims are easy to check by a symbolic algebra program.
\endproof
   We call the surface $\Btre'$ the (BMSS) \emph{companion surface} of $\Btre$.
   It enjoys several peculiar properties which are listed in the next proposition.
\begin{proposition}\label{prop:B3' hypo osculating}
   The surface $\Btre'$ is hypo-osculating. Moreover, we have
   $$\dim\Osc_Q^{(2)}\Btre'=4$$
   for all points $Q\in\P^2$ with the exception of points $Q\in Z$, where
   $\dim\Osc_{P_i}^{(2)}\Btre'=3$ for $i=1,\ldots,9$.
\end{proposition}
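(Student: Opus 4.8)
The plan is to follow the strategy of Proposition~\ref{prop: B3 is hypo-osculating} and read off the osculating dimension from the rank of a Hessian matrix. Writing $\eta=(\eta_0,\dots,\eta_5)$ for the six cubics generating $I'$, I would form the $6\times6$ matrix $H_Q\eta$ whose columns are the $\eta_i$, whose rows are the six second-order operators $\partial_a^2,\partial_b^2,\partial_c^2,\partial_a\partial_b,\partial_a\partial_c,\partial_b\partial_c$, and whose entries are these operators applied to the $\eta_i$ and evaluated at $Q=(a:b:c)$. Since the $\eta_i$ are homogeneous of degree $3$, Euler's relation (and its differentiated form) shows that both the values $\eta_i(Q)$ and the first partials lie in the span of the second partials; hence $\Osc_Q^{(2)}\Btre'$ is the projectivization of the column span of $H_Q\eta$ and $\dim\Osc_Q^{(2)}\Btre'=\operatorname{rank}H_Q\eta-1$. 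The corank of $H_Q\eta$ equals the number of Laplace equations, and it also equals the dimension of the space of cubics $\sum\lambda_i\eta_i$ in the system $I'$ having a triple point at $Q$ (the right kernel), since the vanishing of all second partials forces, again by Euler, the vanishing of the lower-order ones.

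With this dictionary the hypo-osculating claim is immediate: the cubic $g_Q$ of \eqref{eq:quartic-dual} is a nonzero element of $I'$, its coefficients being (up to scalar) the six quartics of \eqref{eq:deg4} evaluated at $Q$, and it has a triple point at $Q$. Its coefficient vector therefore lies in the right kernel of $H_Q\eta$, so $\det H_Q\eta\equiv0$ and $\Btre'$ satisfies at least one Laplace equation, i.e.\ $\dim\Osc_Q^{(2)}\Btre'\le4$ for all $Q$. To get equality at a general point I would exhibit one explicit $Q$ at which $g_Q$ spans the entire space of triple-point cubics, equivalently at which some $5\times5$ minor of $H_Q\eta$ is nonzero. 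As the locus where $\operatorname{rank}H_Q\eta\le4$ is closed, this forces the rank to be exactly $5$, and so $\dim\Osc_Q^{(2)}\Btre'=4$, on a dense open set.

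It remains to show that the rank drops to $4$ exactly on the nine points of $Z$. Here I would use symmetry: the system $I'$ is invariant under the group generated by permutations of $(a,b,c)$ and the sign changes $a\mapsto-a$, $b\mapsto-b$, $c\mapsto-c$, and this group permutes $Z$ in the two orbits $\{P_1,P_2,P_3\}$ and $\{P_4,\dots,P_9\}$, so it suffices to treat one representative of each. At $P_1=(1:0:0)$ a combination $\sum\lambda_i\eta_i$ has a triple point precisely when it contains no monomial divisible by $a$, which forces it into $\langle b^3,c^3\rangle$; the triple-point space is thus two-dimensional, $\operatorname{rank}H_{P_1}\eta=4$, and $\dim\Osc_{P_1}^{(2)}\Btre'=3$. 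For $P_4=(1:1:0)$ the same analysis, after the change of variable $a\mapsto a-b$ that moves $P_4$ to a coordinate point, again produces a two-dimensional kernel; a nonzero $4\times4$ minor confirms the rank is exactly $4$ and not lower.

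The step I expect to be the main obstacle is the global statement that no further rank drop occurs away from $Z$, that is, that the ideal of $5\times5$ minors of $H_Q\eta$ cuts out exactly the reduced scheme $Z$ and nothing more. I would settle this by computing that minor ideal and matching its (necessarily finite and symmetric) zero locus with $I(Z)$, the $B_3$-symmetry again reducing the check to the two orbit representatives; in practice this is the one place where a symbolic algebra computation, as already invoked in Proposition~\ref{prop: B3 is hypo-osculating}, does the decisive work. Everything else is routine evaluation of determinants and minors of the explicit $6\times6$ matrix $H_Q\eta$.
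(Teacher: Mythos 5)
Your proposal is correct and follows essentially the same route as the paper, which proves this proposition precisely by analyzing the Hesse matrix of $\eta$ ``along the same lines'' as Proposition~\ref{prop: B3 is hypo-osculating}, i.e.\ by a rank stratification of $H_Q\eta$ verified symbolically (the paper omits the calculations entirely). Your refinements --- using the dual cubic $g_Q$ of \eqref{eq:quartic-dual} as an explicit kernel vector to get $\det H_Q\eta\equiv 0$ without computation, and the $B_3$-symmetry to reduce the analysis at $Z$ to the orbit representatives $P_1$ and $P_4$ --- are sound and make the argument more transparent, but the decisive global step (the $5\times 5$ minors vanish exactly on $Z$) rests on the same symbolic check the paper relies on.
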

\proof
   The assertion follows directly from the analysis of the Hesse matrix of $\eta$
   along the same lines as in the proof of Proposition \ref{prop: B3 is hypo-osculating}.
   We omit the easy calculations.
\endproof
   Since $I'$ has $6$ generators, it is unexpected that it contains the element $g_Q$
   vanishing to order $3$ at a general point $Q$. Of course, a cubic curve with
   a triple point splits in a product of lines, so, similarly as in the case
   of the Shifrin and the Togliatti surfaces we obtain the following.
\begin{property}\label{pro:reducible on companion}
   All divisors cut out on the companion $B_3$ surface by 2nd osculating spaces are reducible.
\end{property}
\section{Final remarks}
   We conclude the note with the following two problems, which we hope to come
   back to in the near future.
\begin{problem}
   Are there companion varieties for other Togliatti-type varieties?
\end{problem}
   Note that there is a certain renaissance of interest in Togliatti systems
   and hypo-osculating varieties stemming partly from their connection to Lefschetz
   Properties and partly from the new research direction of unexpected hypersurfaces
   established in \cite{CHMN}, see also \cite{MezMir18}, \cite{DigIla18}, \cite{Mez18}.

   It follows immediately from \eqref{eq:Togliatti} that the companion surface for the Togliatti surface
   is the surface itself. In other words, the Togliatti surface is BMSS self-dual.

   Taking into account Properties \ref{pro:reducible on Shifrin}, \ref{pro:reducible on Togliatti}, \ref{pro:reducible on companion}
   and the irreducibility of hypo-osculating divisors on the $B_3$ surface, it is tempting to ask the following question,
   which seems more feasible than the Shifrin conjecture mentioned in section \ref{sec: Shifrin}.
\begin{problem}\label{prob:irreducible Togliatti}
   Does there exist a smooth Togliatti-type surface $X$ in $\P^5$ such that
   the divisor cut out on $X$ by the second osculating space at a general point is irreducible?
\end{problem}

\paragraph*{Acknowledgements.}
   I would like to thank Igor Dolgachev for suggesting that I investigate the
   surface associated to the $B_3$ root system. I thank heartily Thomas Bauer and Brian Harbourne for
   very helpful suggestions on the exposition of the manuscript. I thank also
   Jean Vall\`es for explaining to me chronological development of the theory
   and other useful comments. Finally I would like to thank Ragni Piene for
   clarifying the status of Shiffrin conjecture for me.
   The writing of this article was finished in Oberwolfach during the workshop
   ''Asymptotic invariants of homogeneous ideals'' held in October 2018.
   It is a pleasure to thank MFO for perfect working conditions.




\bigskip
\bigskip
\small

   Justyna Szpond,
   Department of Mathematics, Pedagogical University of Cracow,
   Podchor\c a\.zych 2,
   PL-30-084 Krak\'ow, Poland.

\nopagebreak
   \textit{E-mail address:} \texttt{szpond@up.krakow.pl}


\end{document}